\pdfoutput=1
\RequirePackage{ifpdf}
\ifpdf 
\documentclass[pdftex]{sigma}
\else
\documentclass{sigma}
\fi

\numberwithin{equation}{section}

\newtheorem{Theorem}{Theorem}[section]
\newtheorem{Corollary}[Theorem]{Corollary}
\newtheorem{Lemma}[Theorem]{Lemma}
\newtheorem{Proposition}[Theorem]{Proposition}
 { \theoremstyle{definition}
\newtheorem{Remark}[Theorem]{Remark} }

\begin{document}

\newcommand{\arXivNumber}{2105.08641}

\renewcommand{\thefootnote}{}

\renewcommand{\PaperNumber}{077}

\FirstPageHeading

 \ShortArticleName{Second-Order Differential Operators in the Limit Circle Case}

\ArticleName{Second-Order Differential Operators\\ in the Limit Circle Case\footnote{This paper is a~contribution to the Special Issue on Mathematics of Integrable Systems: Classical and Quantum in honor of Leon Takhtajan.

The full collection is available at \href{https://www.emis.de/journals/SIGMA/Takhtajan.html}{https://www.emis.de/journals/SIGMA/Takhtajan.html}}}

\Author{Dmitri R.~YAFAEV~$^{\rm abc}$}

\AuthorNameForHeading{D.R.~Yafaev}

\Address{$^{\rm a)}$~Universit\'e de Rennes, CNRS, IRMAR-UMR 6625, F-35000 Rennes, France}
\EmailD{\href{mailto:yafaev@univ-rennes1.fr}{yafaev@univ-rennes1.fr}}
\Address{$^{\rm b)}$~St.~Petersburg University, 7/9 Universitetskaya Emb., St.~Petersburg, 199034, Russia}
\Address{$^{\rm c)}$~Sirius University of Science and Technology, 1 Olympiysky Ave., Sochi, 354340, Russia}

\ArticleDates{Received May 20, 2021, in final form August 14, 2021; Published online August 16, 2021}

\Abstract{We consider symmetric second-order differential operators with real coefficients such that the corresponding differential equation is in the limit circle case at infinity. Our goal is to construct the theory of self-adjoint realizations of such operators by an analogy with the case of Jacobi operators. We introduce a new object, the quasiresolvent of the maximal operator, and use it to obtain a very explicit formula for the resolvents of all self-adjoint realizations. In particular, this yields a simple representation for the Cauchy--Stieltjes transforms of the spectral measures playing the role of the classical Nevanlinna formula in the theory of Jacobi operators.}

\Keywords{second-order differential equations; minimal and maximal differential operators; self-adjoint extensions; quasiresolvents; resolvents}

\Classification{33C45; 39A70; 47A40; 47B39}

\begin{flushright}
\begin{minipage}{65mm}
\it Dedicated to Leon Takhtajan\\ on the occasion of his 70th anniversary
\end{minipage}
\end{flushright}

\renewcommand{\thefootnote}{\arabic{footnote}}
\setcounter{footnote}{0}

\section{Introduction}

\subsection{Setting the problem}

It is a common wisdom that spectral properties of second-order differential operators and Jacobi operators are in many respects similar. Recently this analogy was used in articles \cite{Y/LD, JLR} to recover known and obtain some new results for
Jacobi operators with coefficients stabilizing at infinity. In this paper we move in the opposite direction and study
differential operators in the limit circle case relying on an analogy with similar problems for Jacobi operators.
Actually, we follow rather closely an approach developed for Jacobi operators in \cite{Jacobi-LC}.

We consider second-order differential operators ${\mathcal A}$ defined by the formula
 \begin{equation}
( {\mathcal A} u)(x)=- \big(p(x) u' (x)\big)' + q(x) u(x) ,\qquad x\in {\mathbb R}_{+},
 \label{eq:ZP+}\end{equation}
 and acting in
 the space $L^2 ({\mathbb R}_{+})$. The scalar product in this space is denoted $\langle \cdot, \cdot\rangle$; $I$ is the identity operator. We always suppose that functions $p(x)$ and $q(x)$ are real. Then the opera\-tor~${\mathcal A}$ defined on the set $ C_{0} ^\infty ({\mathbb R}_{+}) $ is symmetric, but to make it self-adjoint, one has to add boundary conditions at $x=0$ and, eventually, for $x\to\infty$. We suppose that conditions at these two points are separated. The boundary condition at the point $x=0$ looks as
 \begin{equation} \label{eq:BCz}
u'(0)= \alpha u(0), \qquad\mbox{where}\quad \alpha =\bar{\alpha}.
 \end{equation}
 The value $\alpha=\infty$ is not excluded. In this case \eqref{eq:BCz} should be understood as the equality $u(0)=0$. We always require condition \eqref{eq:BCz}, fix $\alpha$ and do not keep track of $\alpha$ in notation.

 Our objective is to study a singular case, where all solutions $u$ of the equation ${\mathcal A}u=z u $ for $z\in{\mathbb C}$ are in $L^2 ({\mathbb R}_{+})$. This instance is known as the limit circle (LC) case. In this case the operator ${\mathcal A}$ with boundary condition~\eqref{eq:BCz} has a one parameter family of self-adjoint realizations distinguished by some conditions for $x\to\infty$. Their description can be performed in various terms. We here adopt an approach similar to the one used for Jacobi operators as presented in the book \cite[Section~16.3]{Schm} or in the survey \cite[Section~2]{Simon}.

 \subsection{Structure of the paper}\label{section1.2}

 In Sections~\ref{section2.1} and~\ref{section2.2}, we collect standard information about differential equations of second-order and realizations of differential operators ${\mathcal A}$ in the space $L^2 ({\mathbb R}_{+})$. We first define symmetric operators $A_{\min}$ with minimal domains
 $\mathcal{D}(A_{\min})$. Their self-adjoint extensions $A$ satisfy the condition
 \begin{equation}
A_{\min}\subset A =A^*\subset A_{\min}^*=: A_{\max}.
 \label{eq:Neum3}\end{equation}
In the LC case the operators $A_{\max}$ are not symmetric. In Section~\ref{section2.3}, we recall the traditional procedure of constructing self-adjoint extensions of the operator $A_{\min}$ in terms of some boundary conditions for $x\to\infty$. Then we suggest in Section~\ref{section2.4} an alternative approach to this problem, where self-adjoint extensions $A_{t}$, $t\in {\mathbb R}\cup\{ \infty\}$, of $A_{\min}$ are defined in a way analogous to the case of the Jacobi operators.

Our main result, an explicit formula for the resolvents $R_t (z)= (A_t-zI)^{-1}$, is obtained in Section~\ref{section3.2}, Theorem~\ref{RES}. Previously, we construct in Section~\ref{section3.1} (see Theorem~\ref{res}) an operator~${\mathcal R}(z)$ playing, in some sense, the role of the resolvent of the maximal operator $A_{\max}$. The operator~${\mathcal R}(z)$, we call it the {\it quasiresolvent}, is the key element of our construction.
 Note that the operator valued function~${\mathcal R}(z)$ depends analytically on $z\in{\mathbb C}$. Then, using the operator~${\mathcal R}(z)$, we prove Theorem~\ref{RES}. This also yields a representation (see Section~\ref{section3.3}) for spectral families~$E_{t}(\lambda)$ of~$A_{t}$ which is a modification of the Nevalinna formula in the theory of Jacobi operators; see the original paper \cite{Nevan} or \cite{Schm, Simon}.

\section{Differential equations and associated operators}\label{section2}

 We refer to the books \cite[Section~17]{Nai} and \cite[Section~X.1]{RS} for necessary background information on the theory of symmetric differential operators. A lot of relevant results can also be found in the encyclopedic book \cite{Zettl};
 see, in particular, Chapter~10.

\subsection{Limit point versus limit circle}\label{section2.1}

Let us consider a second-order differential equation
\begin{equation}
- (p(x) u' (x))' + q(x) u(x)=z u(x)
 \label{eq:Jy}\end{equation}
 associated with operator \eqref{eq:ZP+}. To avoid inessential technical complications, we
 always suppose that $ p \in C^1 ({\mathbb R}_{+})$, $ q \in C ({\mathbb R}_{+})$ and the functions $p(x)$, $q(x)$ have finite limits as $x\to 0$.
 More general conditions on the regularity of $p(x)$ and $q(x)$ are stated, for example, in \cite[Section~15]{Nai}. We assume that $p(x)> 0$ for $x\geq 0$. The solutions of equation~\eqref{eq:Jy} exist, belong to $C^2 ({\mathbb R}_{+})$ and they have limits $u(+0)= : u(0)$, $u'(+0)= : u'(0)$. A solution $u(x)$ is distinguished uniquely by boundary conditions $u(0)=u_{0}$, $u'(0)=u_{1}$.

Recall that for arbitrary solutions $u$ and $v$ of equation \eqref{eq:Jy} their Wronskian
 \[
\{ u, v \} : = p(x) (u' (x) v (x)- u (x) v' (x))
\]
does not depend on $x\in{\mathbb R}_{+}$. Clearly, the Wronskian $\{ u, v \} =0$ if and only if the solutions $u$ and $v$ are proportional.

We introduce a couple of standard solutions of equation \eqref{eq:Jy} by boundary conditions
 \begin{equation}
 \begin{cases}
\varphi_{z}(0)= 1,&\quad \varphi'_{z}(0)=\alpha,
\\
\theta_{z}(0)= 0,& \quad \theta'_{z}(0)=- p(0)^{-1},
 \end{cases} \qquad \mbox{if} \quad \alpha\in{\mathbb R}
 \label{eq:Pz}\end{equation}
 and
 \begin{equation}
 \begin{cases}
\varphi_{z}(0)= 0,&\quad \varphi'_{z}(0)=1,
\\
\theta_{z}(0)= p(0)^{-1},&\quad \theta'_{z}(0)= 0,
 \end{cases} \qquad \mbox{if}\quad \alpha=\infty.
\label{eq:Pzi}\end{equation}
 Clearly, $\varphi_{z} (x)$ (but not $\theta_{z} (x)$) satisfies boundary condition \eqref{eq:BCz}. Note also that the Wronskian
 $\{\varphi_{z}, \theta_{z}\}=1$.

The Weyl limit point/circle theory (see, e.g., \cite[Chapter~IX]{CoLe}) states that dif\-fe\-ren\-tial equation~\eqref{eq:Jy} always has a non-trivial solution in $L^2 ({\mathbb R}_{+})$ for $\operatorname{Im} z \neq 0$. This solution is either unique (up to a constant factor) or all solutions of~\eqref{eq:Jy} belong to $L^2 ({\mathbb R}_{+})$.
The first instance is known as the limit point (LP) case and the second one~-- as the limit circle (LC) case.
In the LC case we have
 \begin{equation}
\varphi_{z}\in L^2 ({\mathbb R}_{+}),\qquad \theta_{z}\in L^2 ({\mathbb R}_{+})\qquad \mbox{for all}\quad z\in {\mathbb C}.
 \label{eq:PQz}\end{equation}

\subsection{Minimal and maximal operators}\label{section2.2}

 We first define a minimal
 operator $A_{00}$ by the equality $A_{00} u= {\mathcal A} u$ on domain
 $ {\mathcal D} (A_{00}) $ that consists of functions $u\in C ^2 ({\mathbb R}_{+}) $ such that $u(x)=0$ for sufficiently large $x$, limits $u(+0)=: u(0)$, $u'(+0)=: u'(0)$ exist and condition \eqref{eq:BCz} is satisfied.
 Thus, the boundary condition \eqref{eq:BCz} at $x=0$ is included in the definition of the operator $A_{00}$ so that its self-adjoint extensions are determined by conditions for $x\to\infty$.

The closure of $A_{00}$ will be denoted $A_{\min} $. This operator is
symmetric in the space $L^2 ({\mathbb R}_{+})$, but without additional assumptions on the coefficients $p(x)$ and $q(x)$ its domain $ {\mathcal D} (A_{\min}) $ does not admit an efficient description. The adjoint operator $A^*_{\min} =: A_{\max}$ is again given by the formula $A_{\max} u={\mathcal A} u$ on a set $ {\mathcal D} (A_{\max})$ that consists of functions
$u(x)$ belonging locally to the Sobolev space ${\sf H}^2$, satisfying boundary condition \eqref{eq:BCz} and such that $u\in L^2 ({\mathbb R}_{+})$, ${\mathcal A} u\in L^2 ({\mathbb R}_{+})$.
In the LC case, the operator $A_{\max}$ is not symmetric. Integrating by parts, we see that
 for all $u, v \in {\mathcal D} (A_{\max})$
 \[
\langle {\mathcal A} u,v \rangle - \langle u, {\mathcal A} v \rangle = \lim_{x\to\infty} p(x) \big(u'(x)\bar{v} (x)- \bar{u}' (x) v(x)\big),
\]
where the limit in the right-hand side exists but is not necessarily zero.

 Recall that
\[
 A_{\min}= A_{\min}^{**} = A_{\max}^{*}.
 \]
 The operator $A_{\min}$ is self-adjoint if and only if the LP case occurs.
 In this paper we are interested in the LC case when
 \begin{equation}
 A_{\min}\neq A_{\max}=A_{\min}^*.
 \label{eq:cl}\end{equation}

Since the operator $A_{\min} $ commutes with the complex conjugation, its deficiency indices
 \[
 d_{\pm}: =\dim\ker (A_{\max}-z I),\qquad \pm \operatorname{Im} z>0,
 \]
 are equal, i.e., $d_{+} =d_{-}=:d $, and, so, $A_{\min} $ admits self-adjoint extensions. For an arbitrary $z\in{\mathbb C}$, all solutions of equation \eqref{eq:Jy} with boundary condition~\eqref{eq:BCz}
 are given by the formula $u (x)= c \varphi_{z}(x)$ for some $c\in{\mathbb C}$. They belong to $ {\mathcal D} (A_{\max})$ if and only if $\varphi_{z} \in L^2 ({\mathbb R}_{+})$. Therefore $d=0$ if $\varphi_{z} \not\in L^2 ({\mathbb R}_{+})$ for $\operatorname{Im} z\neq 0$; otherwise $d=1$.

\subsection{Boundary conditions at infinity}\label{section2.3}

 In this paper we are interested in the case, where equation \eqref{eq:Jy} is in the limit circle (LC) case at infinity. This means that all solutions of this equation for some (and then for all) $z\in{\mathbb C}$ are in $L^2({\mathbb R}_{+})$ or, equivalently, that relation~\eqref{eq:cl} is satisfied.

First, we briefly recall the traditional description of self-adjoint extensions of the minimal operator $J_{\min}$ in terms of boundary conditions at infinity.
 We refer to the classical books \cite[Chapter~IX, Section~4]{CoLe} and \cite[Section~18]{Nai} for detailed presentations. We mention also the relatively recent book~\cite{Schm}, where a concise exposition of the case of second-order differential operators is given. The results stated below can be found, for example, in \cite[Proposition~15.14]{Schm}.

 Let $v_{j}(x)$, $j=1,2$, be some real valued functions of $x\in{\mathbb R}_{+}$ such that
 \begin{equation}
 \lim_{x\to\infty} p(x) \big(v_{1}'(x) v_2 (x)- v_{1}(x) v_{2}'(x)\big)=1.
 \label{eq:bc1}\end{equation}
 Let a set ${\mathcal D}^{(s)} \subset {\mathcal D} ( A_{\max})$ consist of functions $u(x)$ satisfying the condition
 \begin{equation}
 \lim_{x\to\infty} p(x) \big(u'(x) (s v_{1}(x) + v_{2}(x))- u(x) (s v_{1}'(x) + v_{2}'(x))\big)=0
 \label{eq:bc}\end{equation}
 if $s\in{\mathbb R}$; if $s=\infty $, then the function $s v_{1}(x) + v_{2}(x)$ in this formula should be replaced by~$v_{1} (x)$.
 Then the restriction ${\sf A}^{(s)}$ of the operator $ A_{\max}$ on domain $ {\mathcal D} ({\sf A}^{(s)}):= {\mathcal D}^{(s)} $ is self-adjoint, and each self-adjoint extension of the operator $ A_{\min}$ coincides with an operator
 ${\sf A}^{(s)}$ for some $s\in{\mathbb R}\cup\{\infty\}$.

 The resolvents of the operators ${\sf A}^{(s)}$ are determined by a formula similar to the regular case (see formula~\eqref{eq:R-LP} below). It turns out that equation~\eqref{eq:Jy}, where $\operatorname{Im} z\neq 0$ has a solution $u(x)=: f^{(s)}_{z} (x)$ satisfying boundary condition~\eqref{eq:bc}. Then, for all $ h\in L^2 ({\mathbb R}_{+})$ and $ \operatorname{Im} z\neq 0$, one has
 \[
\big( \big({\sf A}^{(s)}-z I \big)^{-1} h\big) (x) = \frac{1}{\big\{\varphi_{z}, f^{(s)}_{z}\big\}} \bigg(f_{z}^{(s)}(x) \int_{0}^x \varphi_{z}(y) h (y) \,{\rm d} y+
 \varphi_{z}(x) \int_x^\infty f_{z}^{(s)}(y) h (y) \,{\rm d}y\bigg),
 \]
where $\big\{\varphi_{z}, f^{(s)}_{z}\big\}$ is the Wronskian of the solutions $\varphi_{z}$ and $ f^{(s)}_{z}$ of equation~\eqref{eq:Jy}.

 Note that the results stated above are obtained by approximating the problem on the half-axis~${\mathbb R}_{+}$ by regular problems on intervals $(0,\ell)$ and studying the limit $\ell\to\infty$.

\subsection{Self-adjoint extensions}\label{section2.4}

 The description of self-adjoint extensions of the operator $A_{\min}$ given in the previous subsection seems to be not very efficient. In particular, it depends on a choice of the functions $v_{j}(x)$, $j=1,2$, satisfying condition \eqref{eq:bc1}. We suggest an alternative approach motivated by an analogy with Jacobi operators in Theorem~\ref{Neum2} (cf.\ \cite[Lemma~6.22 and Theorem~6.23]{Schm} or \cite[Theorem~2.6]{Simon}). In the long run, it relies on von Neumann formulas but is adapted to operators~\eqref{eq:ZP+} with real coefficients~$p(x)$ and~$q(x)$.

Our descriptions of various domains are given in terms of the solutions $\varphi_{z}(x)$ and $\theta_{z}(x)$ of differential equation \eqref{eq:Jy}. Note that the function $\varphi_{z}(x)$ satisfies boundary condition \eqref{eq:BCz} so that $\varphi_{z} \in {\mathcal D}(A_{\max})$, but this is not the case for $\theta_{z}(x)$. To get rid of this nuisance, we introduce a function $\tilde{\theta}_z (x)= \omega(x) \theta_z (x)$, where the cut-off $\omega\in C^\infty ({\mathbb R}_{+})$, $\omega(x)=0$ for small $x$ and $\omega(x)=1$ for large $x$; then $\tilde\theta_{z} \in {\mathcal D}(A_{\max})$. A direct calculation shows that
 \begin{equation}
 \big( {\mathcal A}\tilde{\theta}_z \big)(x)-z\tilde{\theta}_z (x)=\psi_{z}(x),
 \label{eq:the}\end{equation}
 where
 \begin{equation}
 \psi_{z}(x)= -p(x) \omega'(x) \theta'_{z} (x) - \big(p(x) \omega'(x) \theta_{z} (x) \big)'
 \label{eq:the1}\end{equation}
 has a compact support.

Now we are in a position to describe ${\mathcal D}(A_{\max})$.
 For a vector $h\in L^2 ({\mathbb R}_{+})$, we denote by $\{h\}$ the one dimensional subspace of $L^2 ({\mathbb R}_{+})$ spanned by the vector $h$. The symbol $\dotplus$ denotes the direct sum of subspaces.

 \begin{Theorem}\label{Neum}
 Let inclusions \eqref{eq:PQz} hold true.
Then
\begin{equation}
{\mathcal D}(A_{\max})= {\mathcal D}(A_{\min}) \dotplus \{\varphi_{0 } \}\dotplus \big\{\tilde{\theta}_{0} \big\}.
 \label{eq:Neum}\end{equation}
 \end{Theorem}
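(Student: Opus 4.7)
The plan is to combine the von Neumann dimension count with a Wronskian argument that distinguishes $\varphi_0$ and $\tilde{\theta}_0$ modulo $\mathcal{D}(A_{\min})$. Since the paper has established that the deficiency indices are $d_+ = d_- = 1$ in the LC case, von Neumann's formulas give
\[
\dim \bigl(\mathcal{D}(A_{\max})/\mathcal{D}(A_{\min})\bigr) = d_+ + d_- = 2.
\]
Thus it suffices to exhibit two elements of $\mathcal{D}(A_{\max})$ that are linearly independent modulo $\mathcal{D}(A_{\min})$.

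First I would check that $\varphi_0, \tilde{\theta}_0 \in \mathcal{D}(A_{\max})$. For $\varphi_0$ this is immediate from \eqref{eq:PQz} and the fact that $\varphi_0$ satisfies \eqref{eq:BCz}. For $\tilde{\theta}_0$, the cut-off ensures $\tilde{\theta}_0$ vanishes near $0$ (so \eqref{eq:BCz} holds trivially) and coincides with $\theta_0$ for large $x$, hence belongs to $L^2$ by \eqref{eq:PQz}; moreover $\mathcal{A}\tilde{\theta}_0 = \psi_0$ is compactly supported by \eqref{eq:the}--\eqref{eq:the1}, so $\mathcal{A}\tilde{\theta}_0 \in L^2(\mathbb{R}_+)$.

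The crucial step is linear independence modulo $\mathcal{D}(A_{\min})$. I would use the characterization of $\mathcal{D}(A_{\min})$ via boundary terms: integration by parts (with the boundary terms at $0$ annihilated by \eqref{eq:BCz}) together with $A_{\min} = A_{\max}^*$ yields
\[
\lim_{x\to\infty} p(x)\bigl(u'(x) v(x) - u(x) v'(x)\bigr) = 0 \qquad \text{for all } u \in \mathcal{D}(A_{\min}),\ v \in \mathcal{D}(A_{\max})
\]
(taking real $v$, since coefficients are real). Suppose now $c_1 \varphi_0 + c_2 \tilde{\theta}_0 \in \mathcal{D}(A_{\min})$. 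Since $\tilde{\theta}_0 = \theta_0$ for large $x$, the Wronskian of $c_1 \varphi_0 + c_2 \tilde{\theta}_0$ with any $v \in \mathcal{D}(A_{\max})$ at infinity equals $c_1\{\varphi_0,v\}(\infty) + c_2\{\theta_0,v\}(\infty)$, and must vanish. Testing against $v = \varphi_0$ (noting $\{\varphi_0,\varphi_0\}=0$ and $\{\theta_0,\varphi_0\} = -1$) forces $c_2 = 0$, and testing against $v = \tilde{\theta}_0$ (using $\{\varphi_0,\theta_0\}=1$) then forces $c_1 = 0$.

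Combining the two, the sum $\mathcal{D}(A_{\min}) + \{\varphi_0\} + \{\tilde{\theta}_0\}$ is direct, has codimension zero in $\mathcal{D}(A_{\max})$ by the dimension count, and therefore equals $\mathcal{D}(A_{\max})$, which is \eqref{eq:Neum}. The only delicate point is the Wronskian identification of $\mathcal{D}(A_{\min})$ inside $\mathcal{D}(A_{\max})$; everything else is bookkeeping using facts already recorded in Sections~\ref{section2.1}--\ref{section2.2}.
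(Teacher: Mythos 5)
Your proof is correct and follows essentially the same route as the paper: directness of the sum is obtained from Green's identity (the sesquilinear boundary form on $\mathcal{D}(A_{\max})$), and equality then follows from the codimension-two count supplied by the deficiency indices $(1,1)$. The only difference is cosmetic: the paper evaluates the form $\langle A_{\max}u,v\rangle-\langle u,A_{\max}v\rangle$ via the compactly supported function $\psi_0=\mathcal{A}\tilde{\theta}_0$ and the integral $\langle\varphi_0,\psi_0\rangle=1$ (Lemma~\ref{Neum1}), whereas you evaluate the same form as a Wronskian limit at infinity; both computations reduce to $\{\varphi_0,\theta_0\}=1$.
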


 \begin{Remark}\label{Neur}
 Since the difference of functions $\tilde{\theta}_0 $ corresponding to two different cut-offs $\omega(x)$ is in ${\mathcal D}(A_{\min})$, the direct sum in~\eqref{eq:Neum} does not depend on a particular choice of $\tilde{\theta}_0$.
 \end{Remark}

 \begin{proof}
 We start a proof of Theorem~\ref{Neum} with a direct calculation.

 \begin{Lemma}\label{Neum1}
 Suppose that
 \begin{equation}
u=u_{0}+ \alpha_{1} \varphi_{0}+ \alpha_{2} \tilde{\theta}_{0 }\qquad\mbox{and}\qquad v=v_{0}+ \beta_{1} \varphi_{0}+ \beta_{2} \tilde{\theta}_{0 },
 \label{eq:Ne1}
 \end{equation}
 where $u_{0}, v_{0} \in {\mathcal D}(A_{\min})$ and $\alpha_{j}, \beta_{j}\in {\mathbb C}$. Then
 \begin{equation}
\langle A_{\max} u,v\rangle -\langle u, A_{\max} v\rangle = \alpha_{2} \overline{\beta_{1}}-\alpha_{1} \overline{\beta_{2}}.
 \label{eq:Ne6}\end{equation}
 \end{Lemma}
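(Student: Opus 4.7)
The plan is to decompose the sesquilinear boundary form
\[
B(u,v):=\langle A_{\max}u,v\rangle-\langle u,A_{\max}v\rangle,\qquad u,v\in\mathcal{D}(A_{\max}),
\]
along the direct sum \eqref{eq:Ne1} and then to compute, one by one, the few surviving scalar contributions. Integration by parts (the boundary term at $x=0$ is killed by the real condition \eqref{eq:BCz}) identifies $B(u,v)$ with the limit $\lim_{x\to\infty}p(x)\bigl(u(x)\overline{v'(x)}-u'(x)\overline{v(x)}\bigr)$, whose existence for $u,v\in\mathcal{D}(A_{\max})$ was recalled just before the theorem; this quantity is linear in $u$ and antilinear in $v$.

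The first key step is to observe that $B$ vanishes as soon as one of its arguments lies in $\mathcal{D}(A_{\min})$. Indeed, for $u_{0}\in\mathcal{D}(A_{\min})$ and $w\in\mathcal{D}(A_{\max})=\mathcal{D}(A_{\min}^{*})$, the defining property of the adjoint yields $\langle A_{\max}w,u_{0}\rangle=\langle w,A_{\min}u_{0}\rangle$; since $A_{\max}u_{0}=A_{\min}u_{0}$, we conclude $B(w,u_{0})=0$, and hence also $B(u_{0},w)=0$ by the relation $B(u,v)=-\overline{B(v,u)}$. Expanding $B(u,v)$ by sesquilinearity with $u,v$ as in \eqref{eq:Ne1}, every term containing either $u_{0}$ or $v_{0}$ therefore drops out, and only four contributions, indexed by pairs drawn from $\{\varphi_{0},\tilde{\theta}_{0}\}\times\{\varphi_{0},\tilde{\theta}_{0}\}$, remain.

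Two of these four contributions vanish by antisymmetry, because $\varphi_{0}$ and $\theta_{0}$ are real-valued (they solve a real equation with real initial data at $x=0$): $B(\varphi_{0},\varphi_{0})=B(\tilde{\theta}_{0},\tilde{\theta}_{0})=0$. For the two off-diagonal terms we use that $\omega(x)=1$ for large $x$, so $\tilde{\theta}_{0}(x)$ and $\tilde{\theta}_{0}'(x)$ coincide with $\theta_{0}(x)$ and $\theta_{0}'(x)$ in the limit. Combined with the $x$-independence of the Wronskian and the normalization $\{\varphi_{0},\theta_{0}\}=1$ read off from \eqref{eq:Pz}--\eqref{eq:Pzi}, this gives $B(\varphi_{0},\tilde{\theta}_{0})=-1$ and $B(\tilde{\theta}_{0},\varphi_{0})=1$. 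Assembling the surviving terms with coefficients $\alpha_{i}\overline{\beta_{j}}$ yields exactly $\alpha_{2}\overline{\beta_{1}}-\alpha_{1}\overline{\beta_{2}}$.

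I expect no serious obstacle: the whole argument is algebraic once the boundary-form representation of $B$ is in place. The only delicate point is sign bookkeeping in matching each off-diagonal boundary limit with $\pm\{\varphi_{0},\theta_{0}\}$, and being explicit that the vanishing of $B$ on $\mathcal{D}(A_{\min})\times\mathcal{D}(A_{\max})$ is an immediate consequence of $A_{\min}=A_{\max}^{*}$, requiring no approximation by compactly supported functions.
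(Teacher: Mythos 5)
Your argument is correct, and it reaches \eqref{eq:Ne6} by a route that differs in its mechanics from the paper's. Both proofs share the same skeleton: the form $B(u,v)=\langle A_{\max}u,v\rangle-\langle u,A_{\max}v\rangle$ is killed on any term involving $u_{0}$ or $v_{0}$ by the adjoint relation $A_{\max}=A_{\min}^{*}$, and the whole computation ultimately reduces to the normalization $\{\varphi_{0},\theta_{0}\}=1$. Where you diverge is in evaluating the surviving cross terms. You invoke the Green-type representation of $B$ as the limit at infinity of $p(x)\bigl(u\bar v'-u'\bar v\bigr)$, valid on all of ${\mathcal D}(A_{\max})$ (a fact the paper states, without proof, at the end of Section~\ref{section2.2}), and then read off $B(\varphi_{0},\tilde{\theta}_{0})=-1$ and $B(\tilde{\theta}_{0},\varphi_{0})=1$ from the constancy of the Wronskian since $\tilde{\theta}_{0}=\theta_{0}$ near infinity. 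The paper instead never touches the boundary form at infinity: it writes $A_{\max}\tilde{\theta}_{0}=\psi_{0}$ with $\psi_{0}$ the compactly supported function \eqref{eq:the1}, expands all inner products, and reduces everything to the single concrete integral $\langle\varphi_{0},\psi_{0}\rangle=\int_{0}^{\infty}p\,\omega'\bigl(\theta_{0}\varphi_{0}'-\theta_{0}'\varphi_{0}\bigr)\,{\rm d}x=\int_{0}^{\infty}\omega'\,{\rm d}x=1$, which needs only integration by parts over a compact set. Your version is arguably more conceptual (it isolates the boundary form and its sesquilinearity once and for all), but it leans on the existence of the limit of the Green bracket for arbitrary elements of ${\mathcal D}(A_{\max})$, whereas the paper's computation is self-contained at that point. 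Your sign bookkeeping checks out against the paper's convention $\{u,v\}=p(u'v-uv')$: with $B(u,v)=\lim_{x\to\infty}p(x)\bigl(u\bar v'-u'\bar v\bigr)$ one indeed gets $B(\varphi_{0},\tilde{\theta}_{0})=-\{\varphi_{0},\theta_{0}\}=-1$, and the assembled sum is $\alpha_{2}\overline{\beta_{1}}-\alpha_{1}\overline{\beta_{2}}$ as required.
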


 \begin{proof}
 Let us calculate
 \[
\langle A_{\max} u,v\rangle = \big\langle A_{\max} \big(u_{0}+ \alpha_{1} \varphi_{0}+ \alpha_{2} \tilde{\theta}_{0 }\big),v_{0}+ \beta_{1} \varphi_0+ \beta_{2} \tilde{\theta}_{0 }\big\rangle.
 \]
 Using \eqref{eq:the}, we see that
 \[
 A_{\max} \big(u_{0}+ \alpha_{1} \varphi_{0}+ \alpha_{2} \tilde{\theta}_{0 }\big)= A_{\min} u_{0} + \alpha_{2}\psi_{0}
 \]
 and
 \begin{align*}
\big \langle A_{\min} u_{0} ,v_{0}+ \beta_{1} \varphi_0+ \beta_{2} \tilde{\theta}_{0 }\big\rangle
 & =
 \langle A_{\min} u_{0} ,v_{0}\rangle + \big\langle u_{0} , A_{\max} \big(\beta_{1} \varphi_0+ \beta_{2} \tilde{\theta}_{0 }\big)\big\rangle
\nonumber \\
 & = \langle A_{\min} u_{0} ,v_{0}\rangle + \overline{\beta_{2}} \langle u_{0} , \psi_{0}\rangle,
 \end{align*}
 whence
 \[
\langle A_{\max} u,v\rangle = \langle A_{\min} u_{0} ,v_{0}\rangle + \overline{\beta_{2}} \langle u_{0} , \psi_{0}\rangle
+ \alpha_{2} \langle \psi_{0},v_{0}\rangle
+ \alpha_{2}\overline{\beta_{1}} \langle \psi_{0} , \varphi_{0}\rangle
+ \alpha_{2}\overline{\beta_{2}} \big\langle \psi_{0} , \tilde{\theta}_{0}\big\rangle .
 \]
 Similarly, we find that
 \[
\langle u, A_{\max} v\rangle = \langle u_{0} , A_{\min}v_{0}\rangle + \alpha_{2} \langle \psi_{0}, v_{0}\rangle
+ \overline{\beta_{2}}\langle u_{0}, \psi_{0}\rangle
+ \overline{\beta_{2}}\alpha_{1}\langle \varphi_{0}, \psi_{0}\rangle
+ \overline{\beta_{2}}\alpha_{2} \big\langle \tilde{\theta}_{0}, \psi_{0}\big\rangle .
 \]
 Comparing the last two equalities and taking into account that the functions $\psi_{0}$, $\varphi_{0}$ and $\tilde{\theta}_{0}$ are real, we see that
 \begin{equation}
\langle A_{\max} u,v\rangle - \langle u, A_{\max} v \rangle= \big(\alpha_{2} \overline{\beta_{1}} - \alpha_{1} \overline{\beta_{2}} \big)\langle \varphi_{0}, \psi_{0}\rangle.
 \label{eq:Ne5}\end{equation}
 Using definition \eqref{eq:the1} and integrating by parts, it is easy to calculate
 \[
 \langle \varphi_{0}, \psi_{0}\rangle=\int_{0}^\infty p(x) \omega'(x) \big(\theta_{0} (x)\varphi_{0}' (x) - \theta'_{0} (x)\varphi_{0} (x)
 \big) \,{\rm d}x.
 \]
 Since $\{ \varphi_{0}, \theta_{0}\}=1$, this integral equals $1$. Therefore identity~\eqref{eq:Ne5} can be rewritten as~\eqref{eq:Ne6}.
 \end{proof}

 Now it is easy to prove Theorem~\ref{Neum}. First we check that the sum in the right-hand side of~\eqref{eq:Neum} is direct, that is, an inclusion
 $
\alpha_{1} \varphi_{0}+ \alpha_{2} \tilde{\theta}_{0 }\in {\mathcal D}(A_{\min})
$
 implies that $\alpha_{1}= \alpha_{2} =0$. Indeed, if this inclusion is true, then
 \[
 \big\langle A_{\max} \big(\alpha_{1} \varphi_{0}+ \alpha_{2} \tilde{\theta}_{0 }\big), \beta_{1} \varphi_{0}+ \beta_{2} \tilde{\theta}_{0 }\big\rangle = \big\langle \alpha_{1} \varphi_{0}+ \alpha_{2} \tilde{\theta}_{0 }, A_{\max} \big(\beta_{1} \varphi_{0}+ \beta_{2} \tilde{\theta}_{0 }\big)\big\rangle
 \]
 for all $\beta_{1}, \beta_{2}\in {\mathbb C}$. Therefore it follows from Lemma~\ref{Neum1} for the particular case $u_{0}=v_{0}=0$ that $ \alpha_{2} \overline{\beta_{1}}-\alpha_{1} \overline{\beta_{2}}=0$ whence $ \alpha_{1} = \alpha_{2}=0$ because $ \beta_{1} $ and $ \beta_{2}$ are arbitrary.

 Obviously, the right-hand side of~\eqref{eq:Neum} is contained in its left-hand side. Actually, there is the equality here because the operator $A_{\min}$ has deficiency indices $(1,1)$ so that the dimension of the factor space $ {\mathcal D}(A_{\max}) / {\mathcal D}(A_{\min}) $ equals~$2$. This concludes the proof of Theorem~\ref{Neum}.
 \end{proof}

All self-adjoint extensions $A_{t}$ of the operator $A_{\min}$ are parametrized by numbers $ t\in {\mathbb R}$ and $t=\infty$. Let sets ${\mathcal D}(A_{t})\subset {\mathcal D}(A_{\max})$ be distinguished by conditions
 \begin{equation}
{\mathcal D}(A_{t})= {\mathcal D}(A_{\min})\dotplus \big\{ t \varphi_{0} + \tilde{\theta}_{0}\big\}, \qquad t\in {\mathbb R},
 \label{eq:Neum1}\end{equation}
and
 \begin{equation}
{\mathcal D}(A_\infty)= {\mathcal D}(A_{\min})\dotplus \{ \varphi_{0 } \}.
 \label{eq:Neum2}\end{equation}

\begin{Theorem}\label{Neum2}
 Let inclusions \eqref{eq:PQz} hold true. Then all operators $A_{t}$ are self-adjoint. Conversely, every operator $A$ satisfying condition
 \eqref{eq:Neum3}
coincides with one of the operators $A_{t}$ for some $t\in{\mathbb R}\cup \{\infty\}$.
 \end{Theorem}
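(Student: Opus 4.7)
The plan is to reduce everything to the boundary form identity \eqref{eq:Ne6} established in Lemma~\ref{Neum1}. That identity already encodes all the information about behavior at infinity, so it should suffice both for the symmetry and self-adjointness of each $A_t$ and for the exhaustion of all self-adjoint extensions of $A_{\min}$.

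First I would verify symmetry, then upgrade it to self-adjointness, for each $A_t$. For $t\in\mathbb{R}$ and $u,v\in\mathcal{D}(A_t)$, the representations $u=u_0+\alpha(t\varphi_0+\tilde\theta_0)$ and $v=v_0+\beta(t\varphi_0+\tilde\theta_0)$ force $\alpha_1=t\alpha_2$ and $\beta_1=t\beta_2$ in the notation of Lemma~\ref{Neum1}, so the right-hand side $\alpha_2\overline{\beta_1}-\alpha_1\overline{\beta_2}$ of \eqref{eq:Ne6} collapses to zero; for $t=\infty$, both $\alpha_2$ and $\beta_2$ vanish and the same conclusion holds. Hence $A_t\subset A_t^*$. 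For the reverse inclusion, the inclusion $A_{\min}\subset A_t$ gives $A_t^*\subset A_{\max}$, so any $v\in\mathcal{D}(A_t^*)$ admits the decomposition $v=v_0+\beta_1\varphi_0+\beta_2\tilde\theta_0$ of Theorem~\ref{Neum}. Letting $u$ range over $\mathcal{D}(A_t)$ and imposing vanishing of the right-hand side of \eqref{eq:Ne6}, one obtains $\alpha_2(\overline{\beta_1}-t\overline{\beta_2})=0$ for all $\alpha_2\in\mathbb{C}$ when $t\in\mathbb{R}$, forcing $\beta_1=t\beta_2$; when $t=\infty$, the condition $-\alpha_1\overline{\beta_2}=0$ forces $\beta_2=0$. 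In either case $v\in\mathcal{D}(A_t)$, so $A_t^*\subset A_t$ and $A_t$ is self-adjoint.

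For the converse direction I would use the dimension count already invoked in the proof of Theorem~\ref{Neum}: $\dim(\mathcal{D}(A_{\max})/\mathcal{D}(A_{\min}))=2$, and any self-adjoint extension $A$ satisfying \eqref{eq:Neum3} has $\dim(\mathcal{D}(A)/\mathcal{D}(A_{\min}))=d=1$ by the von Neumann formulas. Pick a representative $e=c_1\varphi_0+c_2\tilde\theta_0$ of a nonzero class in $\mathcal{D}(A)/\mathcal{D}(A_{\min})$; symmetry of $A$ applied via \eqref{eq:Ne6} to $u=v=e$ yields $c_2\overline{c_1}-c_1\overline{c_2}=0$, i.e., $c_2\overline{c_1}\in\mathbb{R}$. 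If $c_2=0$ then $\mathcal{D}(A)=\mathcal{D}(A_\infty)$ and $A=A_\infty$; otherwise $t:=c_1/c_2$ is real and $\mathcal{D}(A)=\mathcal{D}(A_t)$, so $A=A_t$. The only genuinely non-formal ingredient is the identity \eqref{eq:Ne6}, but that has already been proved thanks to the normalization $\langle\varphi_0,\psi_0\rangle=\{\varphi_0,\theta_0\}=1$; everything else is the straightforward linear algebra of a two-dimensional quotient carrying a non-degenerate skew-Hermitian form, and I do not anticipate real obstacles there.
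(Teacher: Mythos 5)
Your proof is correct and follows essentially the same route as the paper: both halves rest entirely on the boundary form identity \eqref{eq:Ne6} from Lemma~\ref{Neum1}, and your self-adjointness argument for each $A_t$ is identical to the paper's. The only deviation is in the converse, where the paper deduces $\beta_1=t\beta_2$ for \emph{every} $v\in{\mathcal D}(A)$ directly from symmetry (obtaining ${\mathcal D}(A)\subset{\mathcal D}(A_t)$ and then equality from self-adjointness), whereas you invoke the von~Neumann dimension count $\dim({\mathcal D}(A)/{\mathcal D}(A_{\min}))=1$ to reduce to a single generator $e$; both variants are valid and equally elementary.
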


 \begin{proof}
 We proceed from Lemma~\ref{Neum1}. Let $u,v \in{\mathcal D}(A_{\max})$ so that equalities \eqref{eq:Ne1} are satisfed.
 If $u,v \in{\mathcal D}(A_{t})$, then according to \eqref{eq:Neum1} or \eqref{eq:Neum2} we have $\alpha_{1}= t\alpha_{2}$, $\beta_{1}= t \beta_{2}$ if $t\in {\mathbb R}$ and $\alpha_2= \beta_{2}=0$ if $t=\infty$. Therefore it follows from relation \eqref{eq:Ne6} that $\langle A_{t}u,v \rangle= \langle u, A_{t} v \rangle$, and hence the operators $A_{t} $ are symmetric.

 If $v\in {\mathcal D}(A_{t}^*)$, then $\langle A_{t}u,v \rangle= \langle u, A_{t} v \rangle$ for all $u\in {\mathcal D}(A_{t})$.
 Thus, according again to \eqref{eq:Ne6}, $\alpha_{2} \overline{\beta_{1}}-\alpha_{1} \overline{\beta_{2}}=0$ for all $\alpha_{1}$, $\alpha_{2}$ such that $\alpha_{1}=t \alpha_{2}$ if $t\in {\mathbb R}$ and such that $ \alpha_{2}= 0$ if $t=\infty$.
 Let first $t\in {\mathbb R}$. Then $\alpha_{2} \big( \overline{\beta_{1}}- t \overline{\beta_{2}}\big)=0 $ whence $\beta_{1}=t\beta_{2}$ because $\alpha_{2}$ is arbitrary. If $t=\infty$, we have $\alpha_{1} \overline{\beta_{2}}=0$ whence $\beta_{2}=0$ because $\alpha_1$ is arbitrary. It follows that $v\in {\mathcal D}(A_{t})$, and consequently $A_{t}=A_{t}^*$.

Suppose that an operator $A$ satisfies \eqref{eq:Neum3}. Since $A$ is symmetric, it follows from Lemma~\ref{Neum1} that $\alpha_{2} \overline{\beta_{1}}=\alpha_{1} \overline{\beta_{2}}$ for all $u, v\in {\mathcal D} (A)$ and the corresponding coefficients $\alpha_{j}$, $\beta_{j}$ defined in \eqref{eq:Ne1}. Suppose that $\alpha_{2}\neq 0$ for some $u \in {\mathcal D} (A)$. Then setting $u=v$, we see that $\alpha_{2} \overline{\alpha_{1}}=\alpha_{1} \overline{\alpha_{2}}$ whence $ \alpha_{1}\alpha_{2}^{-1}=:t \in{\mathbb R}$. Now equality $\alpha_{2} \overline{\beta_{1}}=\alpha_{1} \overline{\beta_{2}}$ implies that
 $\beta_{1}= t \beta_{2}$ for all $v \in {\mathcal D} (A)$ so that $A=A_{t}$. If $\alpha_{2}= 0$ for all $u \in {\mathcal D} (A)$, then $A=A_\infty$.
 \end{proof}

\section{Resolvents of self-adjoint extensions}\label{section3}

 Our goal in this section is to construct resolvents of the operators $A_{t}$. We start however with a~construction of a similar object for the operator $A_{\max}$.

\subsection{Quasiresolvent of the maximal operator}\label{section3.1}

Recall that in the LC case
 inclusions~\eqref{eq:PQz} are satisfied. Let us define, for all $z\in {\mathbb C}$, a~bounded operator ${\mathcal R} (z)$ in the space $L^2 ({\mathbb R}_{+})$ by the equality
 \begin{equation}
 ( {\mathcal R} (z)h) (x) = \theta_{z} (x) \int_{0}^x \varphi_{z} (y) h(y) \,{\rm d}y+
\varphi_{z} (x) \int_x^\infty \theta_{z} (y) h(y) \,{\rm d} y .
 \label{eq:RR11}\end{equation}
We prove (see Theorem~\ref{res}) that, in a natural sense, ${\mathcal R} (z)$ can be considered as a quasiresolvent of the operator~$A_{\max}$. It plays the role of the resolvent of the operator~$A_{\max}$.

Let us enumerate some simple properties of the operator ${\mathcal R} (z)$.
 Obviously, the operator ${\mathcal R} (z)$ belongs to the Hilbert--Schmidt class. It depends analytically on $z\in {\mathbb C}$ and ${\mathcal R} (z)^*={\mathcal R} (\bar{z})$. Differentiating
 definition \eqref{eq:RR11}, we see that
 \begin{equation}
 ( {\mathcal R} (z)h)' (x) = \theta_{z}' (x) \int_{0}^x \varphi_{z} (y) h(y) \,{\rm d}y+
 \varphi_{z}' (x) \int_x^\infty \theta_{z} (y) h(y) \,{\rm d}y
 \label{eq:RR12}\end{equation}
 for all $h\in L^2 ({\mathbb R}_{+})$.
 In particular, it follows from relations~\eqref{eq:RR11} and~\eqref{eq:RR12} that
 \begin{equation}
({\mathcal R} (z)h)(0)= \varphi_{z} (0)\langle h, \theta_{\bar{z}}\rangle
\label{eq:r01}\end{equation}
and
 \begin{equation}
({\mathcal R} (z)h)'(0)= \varphi_{z}' (0)\langle h, \theta_{\bar{z}}\rangle,
\label{eq:r02}\end{equation}
where $\varphi_{z} (0)$ and $\varphi_{z}' (0)$ are defined by equalities \eqref{eq:Pz} or \eqref{eq:Pzi}.

A proof of the following statement is close to the construction of the resolvent for essentially self-adjoint Schr\"odinger operators.

 \begin{Theorem}\label{res}
 Let inclusions \eqref{eq:PQz} hold true.
 For all $z \in {\mathbb C}$, we have
 \begin{equation}
{\mathcal R} (z)\colon \ L^2 ({\mathbb R}_{+})\to {\mathcal D} (A_{\max} )
\label{eq:qres}\end{equation}
and
 \begin{equation}
(A_{\max} -zI) {\mathcal R} (z)=I.
\label{eq:qres1}\end{equation}
 \end{Theorem}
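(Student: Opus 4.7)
The plan is to verify the two claims of the theorem by direct calculation, treating formula \eqref{eq:RR11} as the standard variation-of-parameters construction of a particular solution to $(\mathcal{A}-z)u = h$. Since both $\varphi_z$ and $\theta_z$ lie in $L^2({\mathbb R}_+)$ by \eqref{eq:PQz}, the two integrals in \eqref{eq:RR11} are absolutely convergent for every $h\in L^2({\mathbb R}_+)$, and by the Cauchy--Schwarz inequality together with the $L^2$ bounds on $\varphi_z$ and $\theta_z$, one sees immediately that $\mathcal{R}(z)h\in L^2({\mathbb R}_+)$.

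Next I would verify formula \eqref{eq:RR12} for $(\mathcal{R}(z)h)'$: differentiating \eqref{eq:RR11} produces four terms, two from the integrand endpoints and two from differentiating $\varphi_z$ and $\theta_z$; the endpoint contributions are $\theta_z(x)\varphi_z(x)h(x) - \varphi_z(x)\theta_z(x)h(x)=0$, leaving precisely \eqref{eq:RR12}. Differentiating once more, multiplied by $p(x)$, gives
\[
\bigl(p(\mathcal{R}(z)h)'\bigr)'(x) = (p\theta_z')'(x)\!\int_0^x\!\varphi_z h\,dy + (p\varphi_z')'(x)\!\int_x^\infty\!\theta_z h\,dy + p(x)\bigl(\theta_z'(x)\varphi_z(x) - \varphi_z'(x)\theta_z(x)\bigr)h(x).
\]
The bracketed boundary term equals $-\{\varphi_z,\theta_z\}(x)h(x) = -h(x)$ since the Wronskian is $1$. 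Assembling $(\mathcal{A}\mathcal{R}(z)h)(x) - z(\mathcal{R}(z)h)(x)$ and using that $\varphi_z$ and $\theta_z$ satisfy $\mathcal{A}u = zu$ makes the two integral terms vanish identically, and only the surviving boundary contribution $h(x)$ remains. This establishes \eqref{eq:qres1}.

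To complete \eqref{eq:qres}, I must show $\mathcal{R}(z)h\in\mathcal{D}(A_{\max})$. Local $H^2$ regularity follows from \eqref{eq:RR11}--\eqref{eq:RR12} since $\varphi_z$, $\theta_z$ are $C^2$ and the indefinite integrals are absolutely continuous; $L^2$ membership of $\mathcal{R}(z)h$ is already noted, and $L^2$ membership of $\mathcal{A}\mathcal{R}(z)h = h + z\mathcal{R}(z)h$ follows from \eqref{eq:qres1}. The remaining point is the boundary condition \eqref{eq:BCz} at $x=0$: here I appeal to the already stated formulas \eqref{eq:r01} and \eqref{eq:r02}, which read $(\mathcal{R}(z)h)(0)=\varphi_z(0)\langle h,\theta_{\bar z}\rangle$ and $(\mathcal{R}(z)h)'(0)=\varphi_z'(0)\langle h,\theta_{\bar z}\rangle$. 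Since $\varphi_z$ satisfies \eqref{eq:BCz} by the choice \eqref{eq:Pz} or \eqref{eq:Pzi} of initial data, the same linear relation passes to $\mathcal{R}(z)h$ at $x=0$.

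I do not expect any single step to be a serious obstacle: the whole proof is the textbook variation-of-parameters verification, with the only twist being that in the LC case we may use the real solution $\theta_z$ itself in place of the usual Weyl solution $f_z^{(s)}$, because $\theta_z\in L^2$ near infinity makes the second integral in \eqref{eq:RR11} converge without needing a boundary condition at infinity. The most delicate bookkeeping is the sign and normalization of the Wronskian term that produces $h(x)$ after the second differentiation, so that is where I would be most careful.
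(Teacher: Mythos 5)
Your proposal is correct and follows essentially the same route as the paper: the variation-of-parameters computation of $(p(\mathcal{R}(z)h)')'$ with the Wronskian term $p(\theta_z'\varphi_z-\theta_z\varphi_z')h=-h$, combined with the boundary relations \eqref{eq:r01}--\eqref{eq:r02} to verify \eqref{eq:BCz}. Your additional remarks on the convergence of the integrals, the $L^2$ membership of $\mathcal{R}(z)h$, and the local regularity only make explicit what the paper leaves implicit.
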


 \begin{proof}
 Let $h\in L^2 ({\mathbb R}_{+})$ and $u(x)= ({\mathcal R} (z)h)(x)$. Boundary condition \eqref{eq:BCz} is a direct consequence of relations \eqref{eq:r01} and \eqref{eq:r02}. Differentiating \eqref{eq:RR12}, we see that
 \begin{gather}
 (p(x)u' (x))' = (p(x) \theta_{z}' (x) )'\int_{0}^x \varphi_{z} (y) h(y) \,{\rm d}y+ (p(x) \varphi_{z}' (x) )' \int_x^\infty \theta_{z} (y) h(y) \,{\rm d}y \nonumber \\
 \hphantom{(p(x)u' (x))' =}{} + p(x) \big( \theta'_{z}(x) \varphi_{z}(x)-\theta
 _{z}(x) \varphi'_{z}(x)\big) h(x).
 \label{eq:RR13}\end{gather}
Since the Wronskian $\{ \varphi_{z}, \theta_{z}\}=1 $, the last term in the right-hand side equals $-h(x)$. Putting now equalities~\eqref{eq:RR11} and~\eqref{eq:RR13} together and using equation~\eqref{eq:Jy} for the functions $\varphi_{z}(x)$ and $\theta_{z}(x)$, we obtain the equation
\[
- (p(x) u' (x))' + q(x) u(x)-z u(x)=h(x) .
 \]
 Taking also into account boundary condition \eqref{eq:BCz}, we see that $ A_{\max} u -z u=h $. Since $h\in L^2 ({\mathbb R}_{+})$, this yields both \eqref{eq:qres} and \eqref{eq:qres1}.
 \end{proof}

 \begin{Remark}\label{res3}
 In definition \eqref{eq:RR11}, only boundary condition \eqref{eq:BCz} for $\varphi_{z} (x)$ and the relation $\{ \varphi_{z}, \theta_{z}\}=1 $ for the Wronskian are essential. For example, one can replace the solution $\theta_{z}(x)$ by $\theta_{z}(x)+ \delta \varphi_{z} (x)$ for some $\delta \in {\mathbb C}$. Then the operator
 $ {\mathcal R} (z)$ will be replaced by
 $\widetilde{\mathcal R} (z)= {\mathcal R} (z) +\delta \langle \cdot, \varphi_{\bar{z} }\rangle \varphi_{z}$ and formulas \eqref{eq:qres}, \eqref{eq:qres1} remain true for $\widetilde{\mathcal R} (z)$.
 \end{Remark}

 Note that solutions $u(x)$ of differential equation \eqref{eq:Jy} satisfying condition \eqref{eq:BCz} are given by the formula $u(x)= \Gamma \varphi_{z} (x)$ for some $ \Gamma \in{\mathbb C}$. Therefore we
 can state

 \begin{Corollary}\label{res1}
 All solutions of the equation
 \[
(A_{\max} -zI) u =h, \qquad \mbox{where} \quad z\in {\mathbb C} \quad \mbox{and}\quad h\in L^2 ({\mathbb R}_{+}),
\]
 for $u\in {\mathcal D} (A_{\max} )$ are given by the formula
 \begin{equation}
 u = \Gamma \varphi_{z}+ {\mathcal R} (z) h \qquad \mbox{for some} \quad \Gamma=\Gamma (z;h) \in{\mathbb C}.
\label{eq:qres3}\end{equation}
 \end{Corollary}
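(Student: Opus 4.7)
The plan is to exhibit $\mathcal{R}(z)h$ as one particular solution and then characterise the homogeneous solutions. By Theorem~\ref{res}, we already know that $\mathcal{R}(z)h\in\mathcal{D}(A_{\max})$ and that $(A_{\max}-zI)\mathcal{R}(z)h = h$, so any function of the form $\Gamma\varphi_{z}+\mathcal{R}(z)h$ lies in $\mathcal{D}(A_{\max})$ (note that $\varphi_{z}\in\mathcal{D}(A_{\max})$ by \eqref{eq:PQz} and because $\varphi_{z}$ satisfies the boundary condition \eqref{eq:BCz}) and solves the equation. This gives the ``easy'' inclusion.

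For the converse, I would take an arbitrary $u\in\mathcal{D}(A_{\max})$ with $(A_{\max}-zI)u=h$ and set $w := u - \mathcal{R}(z)h$. Then $w\in\mathcal{D}(A_{\max})$ and $(A_{\max}-zI)w=0$, i.e., $w$ belongs to $\ker(A_{\max}-zI)$. Thus the task reduces to showing that this kernel is exactly the one-dimensional span of $\varphi_{z}$.

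The key point is the observation made just before the statement of the corollary: a function $w\in\mathcal{D}(A_{\max})$ annihilated by $A_{\max}-zI$ is a (classical) solution of \eqref{eq:Jy} satisfying the boundary condition \eqref{eq:BCz} at $x=0$, and the set of such solutions is precisely $\{\Gamma\varphi_{z}:\Gamma\in\mathbb{C}\}$, since $\varphi_{z}$ is uniquely determined up to a scalar by \eqref{eq:Pz}/\eqref{eq:Pzi}. In the LC case the additional requirement $w\in L^{2}(\mathbb{R}_{+})$ is automatic by~\eqref{eq:PQz}, so no solution is lost. Hence $w=\Gamma\varphi_{z}$ for some $\Gamma\in\mathbb{C}$, which gives~\eqref{eq:qres3}.

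There is essentially no obstacle here: the corollary is a one-line consequence of Theorem~\ref{res} together with the one-dimensionality of the space of solutions of \eqref{eq:Jy} that satisfy \eqref{eq:BCz}. The only point to keep in mind is that this uses the LC hypothesis in an essential way, through \eqref{eq:PQz}, to ensure that $\varphi_{z}$ itself belongs to $\mathcal{D}(A_{\max})$ for every $z\in\mathbb{C}$ (not only for $\operatorname{Im}z\neq 0$).
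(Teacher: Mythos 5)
Your proposal is correct and follows essentially the same route as the paper: the paper derives the corollary immediately from Theorem~\ref{res} together with the preceding observation that the solutions of \eqref{eq:Jy} satisfying \eqref{eq:BCz} are exactly the multiples of $\varphi_{z}$, which in the LC case all lie in $L^{2}(\mathbb{R}_{+})$ by \eqref{eq:PQz}. Your write-up merely makes explicit the reduction to the kernel of $A_{\max}-zI$, which the paper leaves implicit.
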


A relation below is a direct consequence of definition \eqref{eq:RR11}
 and condition \eqref{eq:PQz}:
 \[
 ( {\mathcal R} (z)h) (x) = \theta_{z} (x) \langle h, \varphi_{\bar{z}} \rangle+ o(|\varphi_{z} (x)| +|\theta_{z} (x)| )\qquad {\rm as}\quad x\to\infty.
 \]

 This asymptotic formula can be supplemented by the following result.

 \begin{Proposition}\label{AS}
 For all $z\in {\mathbb C}$ and all $h\in L^2 ({\mathbb R}_{+})$, we have
 \begin{equation}
 u:= {\mathcal R} (z)h - \tilde{\theta}_{z} \langle h, \varphi_{ \bar{z}} \rangle\in {\mathcal D}(A_{\min}).
 \label{eq:Ras1}\end{equation}
 \end{Proposition}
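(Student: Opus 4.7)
The plan is to use Theorem~\ref{Neum} to decompose $u=u_{0}+\alpha_{1}\varphi_{0}+\alpha_{2}\tilde{\theta}_{0}$ with $u_{0}\in\mathcal{D}(A_{\min})$ and $\alpha_{j}\in\mathbb{C}$, and then to show that both $\alpha_{1}$ and $\alpha_{2}$ vanish. Lemma~\ref{Neum1} applied with $v=\varphi_{0}$ (using $A_{\max}\varphi_{0}=0$) and with $v=\tilde{\theta}_{0}$ (using $A_{\max}\tilde{\theta}_{0}=\psi_{0}$) yields
\[
\alpha_{2}=\langle A_{\max}u,\varphi_{0}\rangle,\qquad -\alpha_{1}=\langle A_{\max}u,\tilde{\theta}_{0}\rangle-\langle u,\psi_{0}\rangle.
\]
Integration by parts turns both right-hand sides into boundary limits at infinity; since $\varphi_{0}$ and $\tilde{\theta}_{0}$ are real, these limits reduce to ordinary Wronskians and
\[
\alpha_{2}=-\lim_{x\to\infty}\{u,\varphi_{0}\}(x),\qquad \alpha_{1}=\lim_{x\to\infty}\{u,\theta_{0}\}(x),
\]
with $\{f,g\}(x)=p(x)(f'(x)g(x)-f(x)g'(x))$ and $\tilde{\theta}_{0}(x)=\theta_{0}(x)$ for $x$ large.

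The next step is to identify the leading behaviour of $u(x)$ at infinity. For $x$ beyond the support of $\omega'$ one has $\tilde{\theta}_{z}(x)=\theta_{z}(x)$, and splitting $\int_{0}^{\infty}\varphi_{z}h\,{\rm d}y=\int_{0}^{x}+\int_{x}^{\infty}$ in $\langle h,\varphi_{\bar z}\rangle$ rewrites \eqref{eq:RR11} as
\[
u(x)=a(x)\theta_{z}(x)+b(x)\varphi_{z}(x),\qquad a(x)=-\int_{x}^{\infty}\varphi_{z}h\,{\rm d}y,\quad b(x)=\int_{x}^{\infty}\theta_{z}h\,{\rm d}y.
\]
By \eqref{eq:PQz} and Cauchy--Schwarz, $a(x),b(x)\to 0$ as $x\to\infty$. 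Differentiating and using $a'=\varphi_{z}h$, $b'=-\theta_{z}h$, the contributions $a'\theta_{z}+b'\varphi_{z}$ cancel, so $u'(x)=a(x)\theta_{z}'(x)+b(x)\varphi_{z}'(x)$, and therefore
\[
\{u,w\}(x)=a(x)\{\theta_{z},w\}(x)+b(x)\{\varphi_{z},w\}(x)
\]
for any $w\in C^{1}(\mathbb{R}_{+})$.

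It remains to verify that the four Wronskians $\{\theta_{z},\varphi_{0}\}$, $\{\varphi_{z},\varphi_{0}\}$, $\{\theta_{z},\theta_{0}\}$, $\{\varphi_{z},\theta_{0}\}$ are bounded on $\mathbb{R}_{+}$. Inserting equation~\eqref{eq:Jy} at spectral parameters $0$ and $z$ into the standard differentiation of a Wronskian shows that the derivative of each one equals $-z$ times a product of two $L^{2}$ functions; e.g.\ $\frac{d}{dx}\{\theta_{z},\varphi_{0}\}=-z\theta_{z}(x)\varphi_{0}(x)\in L^{1}(\mathbb{R}_{+})$ by \eqref{eq:PQz} and Cauchy--Schwarz. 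Hence each Wronskian admits a finite limit at infinity and is in particular bounded. Together with $a(x),b(x)\to 0$ this forces $\{u,\varphi_{0}\}(x)\to 0$ and $\{u,\theta_{0}\}(x)\to 0$, whence $\alpha_{1}=\alpha_{2}=0$ and $u\in\mathcal{D}(A_{\min})$.

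The crucial point is the clean asymptotic form $u=a\theta_{z}+b\varphi_{z}$ with $a,b\to 0$: this is exactly what the subtraction of $\tilde{\theta}_{z}\langle h,\varphi_{\bar z}\rangle$ in the statement is designed to accomplish, by cancelling the non-decaying $\theta_{z}(x)\langle h,\varphi_{\bar z}\rangle$ tail of $\mathcal{R}(z)h$ isolated just before the proposition. Once this cancellation is in place, the remainder of the argument is mechanical.
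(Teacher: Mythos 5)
Your argument is correct, but it follows a genuinely different route from the paper. The paper's proof is a soft approximation argument: for $h$ with compact support one reads off directly from \eqref{eq:RR11} that $u$ vanishes for large $x$ and satisfies \eqref{eq:BCz}, so $u\in{\mathcal D}(A_{00})$; for general $h$ one approximates by compactly supported $h^{(k)}$ and uses the boundedness of ${\mathcal R}(z)$ together with the definition of $A_{\min}$ as the closure of $A_{00}$ to pass to the limit in the graph norm. You instead invoke Theorem~\ref{Neum} (legitimately available, and $u\in{\mathcal D}(A_{\max})$ by Theorem~\ref{res} and the inclusion $\tilde\theta_z\in{\mathcal D}(A_{\max})$), extract the coefficients $\alpha_1$, $\alpha_2$ from Lemma~\ref{Neum1} and the Green identity of Section~\ref{section2.2} as Wronskian limits at infinity, and kill them via the exact cancellation $u=a\theta_z+b\varphi_z$, $u'=a\theta_z'+b\varphi_z'$ with $a,b\to 0$, plus the boundedness of the mixed Wronskians (correctly obtained from $\tfrac{{\rm d}}{{\rm d}x}\{\theta_z,\varphi_0\}=-z\theta_z\varphi_0\in L^1$, etc.). All the individual steps check out, including the identification $\overline{\varphi_{\bar z}}=\varphi_z$ needed to split $\langle h,\varphi_{\bar z}\rangle$. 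The paper's route is shorter and needs no information about solutions at infinity; yours costs the extra (standard) Wronskian estimates and presupposes Theorem~\ref{Neum}, but in exchange it exhibits the coefficients of the decomposition \eqref{eq:Neum} as the limits $\lim_{x\to\infty}\{u,\varphi_0\}$ and $\lim_{x\to\infty}\{u,\theta_0\}$, which makes the connection with the boundary conditions at infinity of Section~\ref{section2.3} explicit.
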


\begin{proof}
If the support of $h(x)$ is compact in ${\mathbb R}_{+}$, then $ ( {\mathcal R} (z)h) (x) = \varphi_{z}(x) \langle h, \theta_{\bar{z}} \rangle$ for sufficiently small $x$ and $ ( {\mathcal R} (z)h) (x) = \theta_{z}(x) \langle h, \varphi_{\bar{z}} \rangle$ for sufficiently large $x$. Therefore $u(x)$ satisfies boundary condition \eqref{eq:BCz} at $x=0$ and $u(x)=0$ for large $x$ whence $u\in{\mathcal D}(A_{00})$.

Let now $h$ be an arbitrary vector in $L^2 ({\mathbb R}_{+})$. Observe that $ u \in {\mathcal D}(A_{\min})$ if and only if there exists a sequence $u^{(k)}\in {\mathcal D}(A_{00})$ such that
 \begin{equation}
u^{(k)}\to u \qquad \mbox{and} \qquad {\mathcal A}u^{(k)}\to {\mathcal A}u
 \label{eq:Ras2}\end{equation}
 in $L^2 ({\mathbb R}_{+})$ as $ k\to\infty$.
 Let us take a sequence of functions $h^{(k)} $ with compact supports in ${\mathbb R}_{+}$
 such that $h^{(k)}\to h$ and set
 \[
 u^{(k)}= {\mathcal R} (z)h^{(k)} - \big \langle h^{(k)}, \varphi_{\bar{z}} \big\rangle \tilde{\theta}_{z} .
 \]
 Then, as was already shown, $u^{(k)} \in {\mathcal D} (A_{00})$ and $u^{(k)}\to u$ as $k\to\infty$ because the operator $ {\mathcal R} (z)$ is bounded. It follows from formula \eqref{eq:qres1} that
 \[
({\mathcal A} -z) u^{(k)}= h^{(k)} - \big\langle h^{(k)}, \varphi_{\bar{z}} \big\rangle ({\mathcal A} -z) \tilde{\theta}_{z} \to h - \langle h, \varphi_{\bar{z}} \rangle ({\mathcal A} -z) \tilde{\theta}_{z}
\]
as $k\to\infty$. The right-hand side equals $({\mathcal A} -z) u$ by formula~\eqref{eq:qres1} and definition~\eqref{eq:Ras1}. This proves relations~\eqref{eq:Ras2} whence $u\in {\mathcal D}(A_{\min})$.
 \end{proof}

\subsection{Resolvent representation}\label{section3.2}

First, we find a link between the solutions $\varphi_{z}$, $\theta_{z}$ of equation \eqref{eq:Jy} for an arbitrary $z \in {\mathbb C}$ and for $z=0$.

\begin{Lemma}\label{PQ}
 For all $z \in {\mathbb C}$, we have
 \begin{equation}
\varphi_{z} - z {\mathcal R}(0) \varphi_{z}=\big( 1-z \langle \varphi_{z}, \theta_{0}\rangle \big) \varphi_{0}
\label{eq:PP1}\end{equation}
and
 \begin{equation}
\theta_{z} - z {\mathcal R}(0) \theta_{z}= - z \langle \theta_{z}, \theta_{0} \rangle \varphi_{0} + \theta_{0}.
\label{eq:QQ}\end{equation}
 \end{Lemma}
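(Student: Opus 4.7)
The plan is to prove both identities by the same strategy: show that the left-hand side satisfies the homogeneous differential equation $\mathcal{A}u=0$, use the fact that the two-dimensional solution space is spanned by $\varphi_0$ and $\theta_0$, and then pin down the coefficients by evaluating at $x=0$ via \eqref{eq:r01} and \eqref{eq:r02}.

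For \eqref{eq:PP1}, I would observe that $\varphi_z\in{\mathcal D}(A_{\max})$ (it lies in $L^2$ by the LC assumption \eqref{eq:PQz} and satisfies \eqref{eq:BCz}), with $A_{\max}\varphi_z=z\varphi_z$; moreover, Theorem~\ref{res} gives $\mathcal{R}(0)\varphi_z\in{\mathcal D}(A_{\max})$ and $A_{\max}\mathcal{R}(0)\varphi_z=\varphi_z$. Hence $u_1:=\varphi_z-z\mathcal{R}(0)\varphi_z$ lies in ${\mathcal D}(A_{\max})$ and satisfies $A_{\max}u_1=0$. In particular $u_1$ obeys boundary condition \eqref{eq:BCz}, so it must be a scalar multiple $c_1\varphi_0$. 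To compute $c_1$, apply \eqref{eq:r01} (for the case $\alpha\in{\mathbb R}$) or \eqref{eq:r02} (for $\alpha=\infty$) with $h=\varphi_z$ and $z$ replaced by $0$, yielding $(\mathcal{R}(0)\varphi_z)(0)=\varphi_0(0)\langle\varphi_z,\theta_0\rangle$ (and similarly for the derivative); matching against $c_1\varphi_0$ at $x=0$ gives $c_1=1-z\langle\varphi_z,\theta_0\rangle$.

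The proof of \eqref{eq:QQ} is analogous but requires one subtlety: the function $\theta_z$ is \emph{not} in ${\mathcal D}(A_{\max})$ because it violates \eqref{eq:BCz} at the origin. Consequently, the manipulation has to be done at the level of the differential expression $\mathcal{A}$ rather than the operator $A_{\max}$: one has $\mathcal{A}\theta_z=z\theta_z$ pointwise, while Theorem~\ref{res} gives $\mathcal{A}\mathcal{R}(0)\theta_z=\theta_z$ pointwise. Therefore $u_2:=\theta_z-z\mathcal{R}(0)\theta_z$ is a classical solution of $\mathcal{A}u=0$, and so $u_2=c_1\varphi_0+c_2\theta_0$ for some constants. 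Computing both $u_2(0)$ and $u_2'(0)$ using \eqref{eq:Pz}/\eqref{eq:Pzi} together with \eqref{eq:r01} and \eqref{eq:r02} (again with $z$ replaced by $0$ and $h=\theta_z$), then solving the resulting $2\times 2$ system against the boundary data of $\varphi_0$ and $\theta_0$, yields $c_1=-z\langle\theta_z,\theta_0\rangle$ and $c_2=1$.

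The only point requiring care is the second identity, where one must remember that $\theta_z\notin{\mathcal D}(A_{\max})$, so the argument via Theorem~\ref{res} has to be stated in pointwise rather than operator form; beyond this bookkeeping, both claims reduce to routine comparison of Cauchy data at the origin.
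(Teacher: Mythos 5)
Your proposal is correct and follows essentially the same route as the paper: show the left-hand side solves $\mathcal{A}u=0$, identify it in the solution space, and fix the constants from the Cauchy data at $x=0$ via \eqref{eq:r01}--\eqref{eq:r02}. The only (cosmetic) difference is in \eqref{eq:QQ}, where the paper subtracts $\theta_0$ first so that the resulting function satisfies \eqref{eq:BCz} and is automatically a multiple of $\varphi_0$, whereas you expand $\theta_z-z\mathcal{R}(0)\theta_z$ in the full basis $\{\varphi_0,\theta_0\}$ and solve a $2\times2$ system; both yield the same coefficients.
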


\begin{proof}
 To prove \eqref{eq:PP1}, we set
 $ u= \varphi_{z} -z {\mathcal R}(0) \varphi_{z} $
and observe that $ {\mathcal A } u= {\mathcal A }\varphi_{z}-z \varphi_{z} =0$ according to equation \eqref{eq:Jy} for $ \varphi_{z} $ and relation \eqref{eq:qres1} (where $z=0$). Since both $ \varphi_{z} (x) $ and $( {\mathcal R}(0) \varphi_{z} ) (x) $ satisfy boundary condition \eqref{eq:BCz}, it follows that $u(x)=c \varphi_{0} (x)$ and hence
 \begin{equation}
\varphi_{z}(x) - z ({\mathcal R}(0) \varphi_{z} )(x)= c \varphi_{0} (x)
\label{eq:PP1A}\end{equation}
for some constant $c\in{\mathbb C}$.
It remains to find this constant. If $\alpha\in {\mathbb R}$, we set $x=0$. Then $\varphi_{z} (0)=\varphi_0 (0)=1$ and
$({\mathcal R}(0) \varphi_{z} )(0)$ is given by \eqref{eq:r01} so that \eqref{eq:PP1A} for $x=0$ yields $c= 1-z \langle \varphi_{z}, \theta_{0}\rangle$. This proves \eqref{eq:PP1}. In the case
 $\alpha=\infty$, we first differentiate \eqref{eq:PP1A} and then set $x=0$. Since $\varphi_{z}' (0)=\varphi_0' (0)=1$,
using \eqref{eq:r02} we again get equality \eqref{eq:PP1}.

The proof of \eqref{eq:QQ} is quite similar. We now set
 \begin{equation}
 v=\theta_{z} - \theta_{0} -z {\mathcal R}(0) \theta_{z}
\label{eq:QQ1}\end{equation}
and find that $ {\mathcal A } v= 0$ according to equation \eqref{eq:Jy} for $\theta_{z} $ and relation \eqref{eq:qres1} (where $z=0$). Next, we observe that
 $v (0)= -z \varphi_{z}(0)\langle \theta_{z}, \theta_{0}\rangle$ because $\theta_{z} (0)= \theta_{0} (0)$ and $ ({\mathcal R}(0) \theta_{z})(0)$ is given by~\eqref{eq:r01}. Similarly, it follows from equalities $\theta_{z}' (0)= \theta_{0}' (0)$ and \eqref{eq:r02} that $v' (0)= -z \varphi_{z}'(0)\langle \theta_{z}, \theta_{0}\rangle$. Thus, $v(x)$ satisfies equation~\eqref{eq:Jy}
 and boundary condition~\eqref{eq:BCz} whence $v(x)=c \varphi_{0} (x)$ for some constant $c\in{\mathbb C}$.
 In the case
 $\alpha\in {\mathbb R}$ we use this equality for $x=0$ and in the case
 $\alpha=\infty$ we use that $v'(x)=c \varphi_{0}' (x)$. In both cases we obtain that $c=- z \langle \theta_{z}, \theta_{0} \rangle$. In view of \eqref{eq:QQ1} this ensures~\eqref{eq:QQ}.
\end{proof}

 Putting together Lemma~\ref{PQ} with Proposition~\ref{AS} (for $z=0$), we can also state the following result.

 \begin{Lemma}\label{PQ+}
 For all $z \in {\mathbb C}$, we have
 \[
\varphi_{z} -\big( 1-z \langle \varphi_{z}, \theta_{0} \rangle \big) \varphi_{0} - z \langle \varphi_{z}, \varphi_0\rangle \tilde\theta_{0} \in {\mathcal D} (A_{\min})
\]
and
 \[
\tilde\theta_{z} + z \langle \theta_{z}, \theta_0\rangle \varphi_0 - (1+z \langle \theta_{z}, \varphi_{0}\rangle ) \tilde\theta_{0} \in {\mathcal D} (A_{\min}).
\]
 \end{Lemma}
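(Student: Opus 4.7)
The plan is to derive both inclusions by combining Lemma~\ref{PQ} with Proposition~\ref{AS} applied at the parameter value $z=0$. The key observation is that \eqref{eq:PP1} and \eqref{eq:QQ} express $\varphi_z$ and $\theta_z$ modulo $\varphi_0$ in terms of $z\mathcal{R}(0)\varphi_z$ and $z\mathcal{R}(0)\theta_z$, while Proposition~\ref{AS} (with $z$ replaced by $0$, so that $\varphi_{\bar 0}=\varphi_0$ is real) says that $\mathcal{R}(0)h - \langle h,\varphi_0\rangle\tilde\theta_0 \in \mathcal{D}(A_{\min})$ for every $h\in L^2(\mathbb{R}_+)$.

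For the first inclusion, I would apply Proposition~\ref{AS} with $h=\varphi_z$, multiply through by $z$, and then use \eqref{eq:PP1} in the form $z\mathcal{R}(0)\varphi_z = \varphi_z - (1-z\langle\varphi_z,\theta_0\rangle)\varphi_0$ to substitute; this immediately produces the displayed expression as an element of $\mathcal{D}(A_{\min})$.

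For the second inclusion I would do the analogous computation with $h=\theta_z$: Proposition~\ref{AS} gives $z\mathcal{R}(0)\theta_z - z\langle\theta_z,\varphi_0\rangle\tilde\theta_0 \in \mathcal{D}(A_{\min})$, and \eqref{eq:QQ} rewrites $z\mathcal{R}(0)\theta_z = \theta_z - \theta_0 + z\langle\theta_z,\theta_0\rangle\varphi_0$. This yields
\[
\theta_z - \theta_0 + z\langle\theta_z,\theta_0\rangle\varphi_0 - z\langle\theta_z,\varphi_0\rangle\tilde\theta_0 \in \mathcal{D}(A_{\min}),
\]
which differs from the target expression by $(\theta_z-\theta_0)-(\tilde\theta_z-\tilde\theta_0) = (1-\omega)(\theta_z-\theta_0)$.

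The only nontrivial point, and the step I expect to take the most care, is showing that this correction term $(1-\omega)(\theta_z-\theta_0)$ lies in $\mathcal{D}(A_{\min})$. It is smooth and compactly supported (because $1-\omega$ vanishes for large $x$), so I need only verify the boundary condition \eqref{eq:BCz} at $x=0$. But by \eqref{eq:Pz} and \eqref{eq:Pzi} the initial data $\theta_z(0)$, $\theta_z'(0)$ do not depend on $z$, so $\theta_z-\theta_0$ vanishes together with its derivative at $x=0$, and the same then holds for $(1-\omega)(\theta_z-\theta_0)$. Thus this function belongs to $\mathcal{D}(A_{00})\subset\mathcal{D}(A_{\min})$, and the second inclusion follows by adding it to the expression above.
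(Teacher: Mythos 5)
Your proof is correct and follows exactly the route the paper intends: the paper gives no written proof of Lemma~\ref{PQ+} beyond the remark that it is obtained by ``putting together Lemma~\ref{PQ} with Proposition~\ref{AS} (for $z=0$)'', which is precisely your argument. Your explicit verification that the correction term $(1-\omega)(\theta_z-\theta_0)$ vanishes with its derivative at $x=0$ and hence lies in ${\mathcal D}(A_{00})\subset{\mathcal D}(A_{\min})$ fills in the one detail the paper leaves implicit, and you handle it correctly.
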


 Now we are in a position to construct the resolvents of the self-adjoint operators $A_t$.

 \begin{Theorem}\label{RES}
 Let inclusions \eqref{eq:PQz} hold true.
 For all $z\in {\mathbb C}$ with $\operatorname{Im} z\neq 0$ and all $h\in L^2 ({\mathbb R}_{+})$, the resolvent $R_t (z)= (A_t-zI)^{-1}$ of the operator $A_t$ is given by an equality
 \begin{equation}
R_{t} (z) h = \gamma_{t} (z) \langle h, \varphi_{\bar{z}}\rangle \varphi_{z}+ {\mathcal R}(z)h,
 \label{eq:RES1}\end{equation}
 where
 \begin{equation}
\gamma_{t}(z)= \frac{z \langle \theta_{z}, \theta_{0}\rangle +\big( 1+ z \langle \theta_{z}, \varphi_{0}\rangle \big)t }
{1- z \langle \varphi_{z}, \theta_{0}\rangle -z \langle \varphi_{z}, \varphi_{0}\rangle t } \qquad \mbox{if} \quad t\in {\mathbb R}
 \label{eq:RES2}\end{equation}
 and
 \begin{equation}
\gamma_{\infty}(z)= - \frac{ 1+ z \langle \theta_{z}, \varphi_{0}\rangle }
{z \langle \varphi_{z}, \varphi_{0}\rangle }.
 \label{eq:RES3}\end{equation}
 \end{Theorem}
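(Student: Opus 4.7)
\medskip

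\noindent\textbf{Proof plan.} The plan is to combine Corollary~\ref{res1} (which gives the form of every $\mathcal{D}(A_{\max})$-solution of $(A_{\max}-zI)u=h$) with the direct-sum decomposition of $\mathcal{D}(A_{\max})$ from Theorem~\ref{Neum} to pin down the coefficient $\Gamma$ uniquely. Fix $h\in L^2({\mathbb R}_{+})$ and $z$ with $\operatorname{Im} z\neq 0$. Since Theorem~\ref{Neum2} says $A_{t}$ is self-adjoint, the equation $(A_{t}-zI)u=h$ has a unique solution $u\in{\mathcal D}(A_{t})\subset{\mathcal D}(A_{\max})$, and by Corollary~\ref{res1} this solution must take the form $u=\Gamma\varphi_{z}+{\mathcal R}(z)h$ for a uniquely determined $\Gamma=\Gamma(z;h)\in{\mathbb C}$. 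So it remains only to compute $\Gamma$ from the requirement $u\in{\mathcal D}(A_{t})$ and to check that the right-hand side of~\eqref{eq:RES1} then coincides with $R_{t}(z)h$.

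\medskip

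To compute $\Gamma$, I would read off the coordinates of $u$ in the decomposition ${\mathcal D}(A_{\max})={\mathcal D}(A_{\min})\dotplus\{\varphi_{0}\}\dotplus\{\tilde\theta_{0}\}$. Proposition~\ref{AS} gives
\[
{\mathcal R}(z)h\equiv \langle h,\varphi_{\bar{z}}\rangle\,\tilde\theta_{z}\pmod{{\mathcal D}(A_{\min})},
\]
while Lemma~\ref{PQ+} gives the expansions
\[
\varphi_{z}\equiv \bigl(1-z\langle\varphi_{z},\theta_{0}\rangle\bigr)\varphi_{0}+z\langle\varphi_{z},\varphi_{0}\rangle\,\tilde\theta_{0},\qquad
\tilde\theta_{z}\equiv -z\langle\theta_{z},\theta_{0}\rangle\,\varphi_{0}+\bigl(1+z\langle\theta_{z},\varphi_{0}\rangle\bigr)\tilde\theta_{0}
\]
modulo ${\mathcal D}(A_{\min})$. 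Plugging into $u=\Gamma\varphi_{z}+{\mathcal R}(z)h$, the coefficient of $\varphi_{0}$ is $\Gamma\bigl(1-z\langle\varphi_{z},\theta_{0}\rangle\bigr)-z\langle\theta_{z},\theta_{0}\rangle\langle h,\varphi_{\bar{z}}\rangle$ and the coefficient of $\tilde\theta_{0}$ is $\Gamma z\langle\varphi_{z},\varphi_{0}\rangle+\bigl(1+z\langle\theta_{z},\varphi_{0}\rangle\bigr)\langle h,\varphi_{\bar{z}}\rangle$.

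\medskip

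By definitions~\eqref{eq:Neum1} and~\eqref{eq:Neum2}, the condition $u\in{\mathcal D}(A_{t})$ is a single linear relation on these two coefficients: for $t\in{\mathbb R}$ the $\varphi_{0}$-coefficient must equal $t$ times the $\tilde\theta_{0}$-coefficient, and for $t=\infty$ the $\tilde\theta_{0}$-coefficient must vanish. Solving the resulting linear equation for $\Gamma$ gives $\Gamma=\gamma_{t}(z)\langle h,\varphi_{\bar{z}}\rangle$ with $\gamma_{t}(z)$ precisely as in~\eqref{eq:RES2} and~\eqref{eq:RES3}, and substituting back yields~\eqref{eq:RES1}. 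The one conceptual issue (the main, and essentially the only, obstacle) is that one must know the denominators in~\eqref{eq:RES2} and~\eqref{eq:RES3} are nonzero when $\operatorname{Im} z\neq 0$; but this is automatic from self-adjointness of $A_{t}$, since the existence and uniqueness of $u\in{\mathcal D}(A_{t})$ force the linear equation for $\Gamma$ above to be uniquely solvable. Beyond this, the argument is pure bookkeeping once Proposition~\ref{AS} and Lemma~\ref{PQ+} are in place.
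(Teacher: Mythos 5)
Your proposal is correct and follows essentially the same route as the paper: Corollary~\ref{res1} gives $u=\Gamma\varphi_{z}+{\mathcal R}(z)h$, and $\Gamma$ is pinned down by reading off the $\varphi_{0}$- and $\tilde\theta_{0}$-components via Proposition~\ref{AS} and Lemma~\ref{PQ+} and imposing the defining relation of ${\mathcal D}(A_{t})$, exactly as in the text. The only (harmless) difference is that the paper first deduces the rank-one form $\Gamma=\gamma_{t}(z)\langle h,\varphi_{\bar z}\rangle$ from the symmetry $R_{t}(z)^{*}=R_{t}(\bar z)$ before solving for $\gamma_{t}(z)$, whereas you obtain that form directly from the linear equation (which also handles the case $\langle h,\varphi_{\bar z}\rangle=0$ cleanly).
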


\begin{proof} According to Theorem~\ref{res} and Corollary~\ref{res1} a vector $u=R_{t} (z) h$ is given by equali\-ty~\eqref{eq:qres3}, where
 $ \Gamma= \Gamma_{t}(z;h)$ is a bounded linear functional of $h\in L^2 ({\mathbb R}_{+})$ so that
 $\Gamma_{t}(z;h)= \big\langle h, f_{z}^{(t)}\big\rangle$
 for some vector $ f_{z}^{(t)} \in L^2 ({\mathbb R}_{+})$.
 Since $R_{t} (z)^*=R_{t} (\bar{z})$ and ${\mathcal R} (z)^*={\mathcal R} (\bar{z})$, we see that
 \[
 \big\langle h, f_{\bar{z}}^{(t)} \big\rangle \varphi_{\bar{z}} =\langle h, \varphi_{z}\rangle f_{z}^{(t)}
 \]
 for all $h\in L^2 ({\mathbb R}_{+})$. It follows that $f_{z}^{(t)} = \overline{\gamma_{t} (z)} \varphi_{\bar{z}}$ for some $\gamma_{t} (z) \in{\mathbb C}$. This yields representation~\eqref{eq:RES1}, where the constant $\gamma_{t} (z) $ is determined by the condition
 \begin{equation}
 R_{t} (z) h\in {\mathcal D} (A_{t}).
 \label{eq:RER}\end{equation}

 Let us show that this inclusion leads to expressions~\eqref{eq:RES2} or~\eqref{eq:RES3} for $\gamma_{t}(z)$.
 By definitions~\eqref{eq:Neum1} or~\eqref{eq:Neum2} of the set ${\mathcal D} (A_{t})$, inclusion \eqref{eq:RER}
 means that
 \begin{gather}
R_{t} (z) h - X \big( t \varphi_{0} + \tilde{\theta}_{0}\big)\in {\mathcal D} (A_{\min}) \quad \mbox{if} \ t\in {\mathbb R}
 \qquad \mbox{and} \qquad R_\infty (z) h - X \varphi_{0} \in {\mathcal D} (A_{\min})
 \label{eq:REA}\end{gather}
 for some number $X=X_{t}(z)\in {\mathbb C}$. On the other hand, it follows from
 relations \eqref{eq:Ras1} and \eqref{eq:RES1} that
 \begin{equation}
R_{t} (z) h -\langle h, \varphi_{\bar{z}}\rangle \big( \gamma_{t} (z) \varphi_{z} + \tilde{\theta}_{z}\big)\in{\mathcal D} (A_{\min}).
 \label{eq:RES4}\end{equation}
 Comparing \eqref{eq:REA} and \eqref{eq:RES4}, we see that \eqref{eq:RER} is equivalent to inclusions
 \begin{equation}
\langle h, \varphi_{\bar{z}}\rangle \big(\gamma_{t} (z) \varphi_{z} + \tilde{\theta}_{z} \big) - X \big( t \varphi_{0} + \tilde{\theta}_{0}\big)\in {\mathcal D} (A_{\min})\qquad \mbox{if} \quad t\in {\mathbb R}
 \label{eq:REA1}\end{equation}
 and
 \begin{equation}
\langle h, \varphi_{\bar{z}}\rangle \big(\gamma_\infty (z) \varphi_{z}+ \tilde{\theta}_{z} \big) - X \varphi_{0} \in {\mathcal D} (A_{\min})
 \label{eq:REA2}\end{equation}

 Note that $\langle h, \varphi_{\bar{z}}\rangle\neq 0$ because the sum in \eqref{eq:Neum} is direct and set
 $Y= \langle h, \varphi_{\bar{z}}\rangle^{-1}X$.
 It follows from Lemma~\ref{PQ+} that
 inclusion \eqref{eq:REA1} is equivalent to an equality
 \begin{gather*}
 \!\gamma_{t} (z) \big( ( 1-z \langle \varphi_{z}, \theta_{0}\rangle ) \varphi_{0} +z \langle \varphi_{z}, \varphi_{0}\rangle \tilde{\theta}_{0}\big)
 + \big({-} z \langle \theta_{z}, \theta_{0} \rangle \varphi_{0} + ( 1+ z \langle \theta_{z}, \varphi_{0}\rangle ) \tilde{\theta}_{0} \big) = Y \big(t \varphi_{0} + \tilde{\theta}_{0}\big) .
 \end{gather*}
 Comparing here the coefficients at $\varphi_{0}$ and $ \tilde{\theta}_{0}$, we obtain equations
 \begin{gather*}
 \gamma_{t} (z) ( 1-z \langle \varphi_{z}, \theta_{0}\rangle )- z \langle \theta_{z}, \theta_{0}\rangle =t Y,
 \\
 \gamma_{t} (z) z \langle \varphi_{z}, \varphi_{0}\rangle + 1+ z \langle \theta_{z}, \varphi_{0}\rangle=Y,
 \end{gather*}
 which yield
 \[
 \frac{ \gamma_{t} (z) ( 1-z \langle \varphi_{z}, \theta_{0}\rangle )- z \langle \theta_{z}, \theta_{0}\rangle }
 {\gamma_{t} (z) z \langle \varphi_{z}, \varphi_{0}\rangle + 1+ z \langle \theta_{z}, \varphi_{0}\rangle } =t.
 \]
 Solving this equation with respect to $\gamma_{t} (z) $, we arrive at formula \eqref{eq:RES2}.
 Similarly, using again Lemma~\ref{PQ+}, we see that
 inclusion \eqref{eq:REA2} is equivalent to an equality
 \[
 \gamma_\infty (z) \big(( 1-z \langle \varphi_{z}, \theta_{0}\rangle ) \varphi_{0} + z \langle \varphi_{z}, \varphi_{0}\rangle \tilde{\theta}_{0} \big)
 + \big({-} z \langle \theta_{z}, \theta_{0} \rangle \varphi_{0} + ( 1+ z \langle \theta_{z}, \varphi_{0}\rangle ) \tilde{\theta}_{0} \big) = Y \varphi_{0} .
 \]
Inclusion \eqref{eq:REA2} holds true if and only if the coefficient at $\tilde{\theta}_{0} $ equals zero. This yields formula~\eqref{eq:RES3}.
\end{proof}

 \begin{Corollary}\label{RESb}
 If $z\in {\mathbb C}$ is a regular point of the operator $A_{t}$, then its resolvent~$R_{t} (z)$ is in the Hilbert--Schmidt class.
 In particular, the spectra of all operators~$A_{t}$ are discrete.
 \end{Corollary}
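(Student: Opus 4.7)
The plan is to read off the Hilbert--Schmidt property directly from the representation in Theorem~\ref{RES} and then propagate it to every regular point via the first resolvent identity.

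First, fix $z\in{\mathbb C}$ with $\operatorname{Im} z\neq 0$. Formula~\eqref{eq:RES1} writes $R_{t}(z)$ as the sum of two operators. The first summand $h\mapsto \gamma_{t}(z)\langle h,\varphi_{\bar z}\rangle\,\varphi_{z}$ is of rank one, since $\varphi_{z},\varphi_{\bar z}\in L^2({\mathbb R}_+)$ by the standing assumption~\eqref{eq:PQz}; hence it is Hilbert--Schmidt. The second summand is the quasiresolvent ${\mathcal R}(z)$, which was already observed (right after the definition~\eqref{eq:RR11}) to be Hilbert--Schmidt: its integral kernel is bounded by $|\varphi_z(x)\theta_z(y)|+|\theta_z(x)\varphi_z(y)|$, whose square is integrable over ${\mathbb R}_+\times{\mathbb R}_+$ in view of~\eqref{eq:PQz}. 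Thus $R_{t}(z)$ lies in the Hilbert--Schmidt class for every non-real $z$.

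Now let $z$ be an arbitrary regular point of $A_{t}$. Pick any $z_{0}\in{\mathbb C}$ with $\operatorname{Im} z_{0}\neq 0$; by the first resolvent identity
\[
R_{t}(z)=R_{t}(z_{0})+(z-z_{0})\,R_{t}(z)R_{t}(z_{0}).
\]
The right-hand side is a sum of a Hilbert--Schmidt operator and the product of a bounded operator with a Hilbert--Schmidt operator, hence itself Hilbert--Schmidt. This proves the first assertion.

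For the second assertion, the resolvent of $A_{t}$ is compact at some (and hence at every) regular point, so by the standard spectral theorem for self-adjoint operators with compact resolvent the spectrum of $A_{t}$ consists of isolated eigenvalues of finite multiplicity with no finite accumulation point, i.e., it is discrete. No step here is a real obstacle; the only small point to keep in mind is that Theorem~\ref{RES} is stated for $\operatorname{Im} z\neq 0$, which is precisely why the resolvent identity is invoked to cover arbitrary regular $z$.
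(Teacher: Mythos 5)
Your argument is correct and is exactly the derivation the paper intends: formula~\eqref{eq:RES1} exhibits $R_{t}(z)$ for $\operatorname{Im}z\neq 0$ as a rank-one operator plus the Hilbert--Schmidt quasiresolvent ${\mathcal R}(z)$, and the first resolvent identity together with the ideal property of the Hilbert--Schmidt class extends this to all regular points. The paper itself does not spell these steps out (it simply records the corollary after Theorem~\ref{RES} and refers to Naimark's book for the classical statement), so your write-up just supplies the routine details of essentially the same approach.
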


The result of this corollary is well known. It follows, for example, from Theorem~1 in Section~19.1 of the book \cite{Nai}.

We emphasize that, for different~$t$, the resolvents $R_{t}(z)$ of the operators $A_{t}$ differ from each other only by the coefficient $\gamma_{t}(z)$ at the rank one operator $\langle\cdot, \varphi_{\bar{z}}\rangle \varphi_{z}$.
 This is consistent with the fact that the operator $A_{\min}$ has deficiency indices $(1,1)$.
 Observe also that
 $ \overline{\gamma_{t}(z)}= \gamma_t (\bar{z})$.

 The functions $z \langle \theta_{z}, \theta_{0}\rangle$, $-1 +z \langle \varphi_{z}, \theta_{0}\rangle$, $ 1+ z \langle \theta_{z}, \varphi_{0}\rangle$ and $z \langle \varphi_{z}, \varphi_{0}\rangle$ in formulas~\eqref{eq:RES2} and~\eqref{eq:RES3} play the role of Nevanlinna's functions (denoted usually $A$, $B$, $C$ and~$D$) in the theory of Jacobi operators.

\subsection{Spectral measure}\label{section3.3}

In view of the spectral theorem, Theorem~\ref{RES} yields a representation for the Cauchy--Stieltjes transform of the spectral measure ${\rm d}E_{t} (\lambda)$ of the operator $A_{t}$.

 \begin{Theorem}\label{RESc}
 Let inclusions \eqref{eq:PQz} hold true.
 Then for all $z\in {\mathbb C}$ with $\operatorname{Im} z\neq 0$ and all $h\in L^2 ({\mathbb R}_{+})$, we have an equality
 \begin{equation}
 \int_{-\infty}^\infty
 (\lambda-z)^{-1} \,{\rm d} (E_{t}(\lambda)h,h)= \gamma_{t} (z)|\langle \varphi_{z},h\rangle |^2 + ({\mathcal R}(z) h,h).
 \label{eq:E1}\end{equation}
 \end{Theorem}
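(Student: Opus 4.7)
The plan is to derive (\ref{eq:E1}) as an essentially immediate corollary of Theorem~\ref{RES} together with the spectral theorem, the only nontrivial ingredient beyond those being a conjugation identity for the solution $\varphi_{z}$ that reflects the reality of $p$, $q$ and of the boundary condition (\ref{eq:BCz}).

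First, I would apply the spectral theorem to the self-adjoint operator $A_{t}$. For every $z\in{\mathbb C}$ with $\operatorname{Im} z\neq 0$ the Cauchy--Stieltjes transform on the left-hand side of (\ref{eq:E1}) is, by definition, $\langle R_{t}(z)h,h\rangle$, so the statement reduces to an identity for the quadratic form of the resolvent.

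Next I would substitute the explicit representation (\ref{eq:RES1}) from Theorem~\ref{RES} and take the inner product with $h$, obtaining
\[
\langle R_{t}(z)h,h\rangle=\gamma_{t}(z)\langle h,\varphi_{\bar z}\rangle\langle \varphi_{z},h\rangle+\langle {\mathcal R}(z)h,h\rangle .
\]
The second term coincides with the last term of (\ref{eq:E1}); only the scalar factor $\langle h,\varphi_{\bar z}\rangle\langle \varphi_{z},h\rangle$ must be rewritten as $|\langle \varphi_{z},h\rangle|^{2}$.

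The last identification is where one must be a little careful, and it is the step I would single out as the only possible source of confusion. Complex conjugation of equation (\ref{eq:Jy}) with $z$ replaced by $\bar z$, combined with the fact that the initial data in (\ref{eq:Pz}) or (\ref{eq:Pzi}) are real, shows that $\overline{\varphi_{\bar z}(x)}$ solves exactly the same Cauchy problem as $\varphi_{z}(x)$; by uniqueness $\overline{\varphi_{\bar z}}=\varphi_{z}$. Therefore
\[
\langle h,\varphi_{\bar z}\rangle=\int_{0}^{\infty}h(x)\overline{\varphi_{\bar z}(x)}\,{\rm d}x=\int_{0}^{\infty}h(x)\varphi_{z}(x)\,{\rm d}x=\overline{\langle \varphi_{z},h\rangle},
\]
so that $\langle h,\varphi_{\bar z}\rangle\langle \varphi_{z},h\rangle=|\langle \varphi_{z},h\rangle|^{2}$, which yields (\ref{eq:E1}). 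There is no substantive obstacle beyond remembering this conjugation symmetry: all the analytic content has already been carried by Theorem~\ref{RES}.
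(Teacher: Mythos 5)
Your overall route is exactly the paper's: Theorem~\ref{RESc} is presented there as an immediate consequence of the spectral theorem and Theorem~\ref{RES}, with no further argument, and your first two steps (identifying the left-hand side of \eqref{eq:E1} with $\langle R_{t}(z)h,h\rangle$ and substituting \eqref{eq:RES1}) are correct and are all the paper itself supplies.

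The problem is precisely the step you single out. The symmetry $\overline{\varphi_{\bar z}}=\varphi_{z}$ is right, and with the convention $\langle u,v\rangle=\int u\bar v$ (the one forced by comparing \eqref{eq:RR11} with \eqref{eq:r01}) it gives $\langle h,\varphi_{\bar z}\rangle=\int_{0}^{\infty}h(x)\varphi_{z}(x)\,{\rm d}x$. However,
\[
\overline{\langle\varphi_{z},h\rangle}=\overline{\int_{0}^{\infty}\varphi_{z}(x)\overline{h(x)}\,{\rm d}x}=\int_{0}^{\infty}\overline{\varphi_{z}(x)}\,h(x)\,{\rm d}x ,
\]
so the equality $\langle h,\varphi_{\bar z}\rangle=\overline{\langle\varphi_{z},h\rangle}$ you assert amounts to $\int h\varphi_{z}=\int h\overline{\varphi_{z}}$, which fails for general $h$ whenever $\varphi_{z}$ is not real, i.e., whenever $\operatorname{Im}z\neq0$. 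What the substitution of \eqref{eq:RES1} actually produces is the coefficient
\[
\langle h,\varphi_{\bar z}\rangle\,\langle\varphi_{z},h\rangle=\Bigl(\int_{0}^{\infty}h\varphi_{z}\,{\rm d}x\Bigr)\Bigl(\int_{0}^{\infty}\varphi_{z}\overline{h}\,{\rm d}x\Bigr),
\]
which for real $h$ is the \emph{square} $\langle\varphi_{z},h\rangle^{2}$ of a complex number, not its squared modulus. Indeed, if the coefficient equalled $|\langle\varphi_{z},h\rangle|^{2}$ for every $h$, polarization of the two sesquilinear forms would give $\langle\cdot,\varphi_{\bar z}\rangle\varphi_{z}=\langle\cdot,\varphi_{z}\rangle\varphi_{z}$, i.e., $\varphi_{\bar z}=\varphi_{z}$, which is impossible for $\operatorname{Im}z\neq0$. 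So the honest conclusion of your (and the paper's) computation is \eqref{eq:E1} with $|\langle\varphi_{z},h\rangle|^{2}$ replaced by $\langle h,\varphi_{\bar z}\rangle\langle\varphi_{z},h\rangle$; the expression printed in \eqref{eq:E1} can only be read in that sense, and your write-up should say this explicitly rather than manufacture the false conjugation identity to match the displayed formula.
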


 Recall that the operators ${\mathcal R}(z)$ are defined by formula \eqref{eq:RR11}. Therefore $({\mathcal R}(z) h,h)$ are entire functions of $z\in{\mathbb C}$, and the singularities of the integral in \eqref{eq:E1} are determined by the function~$\gamma_{t} (z)$. Thus, \eqref{eq:E1} can be considered as a modification of the classical Nevanlinna formula (see his original paper \cite{Nevan} or, for example, formula (7.6) in the book \cite{Schm}) for the Cauchy--Stieltjes transform of the spectral measure in the theory of Jacobi operators. We mention however that, for Jacobi operators acting in the space $\ell^2 ({\mathbb Z}_{+})$, there is the canonical choice of a generating vector and of a spectral measure. This is not the case for differential operators in $L^2 ({\mathbb R}_{+})$.

 Let us discuss spectral consequences of Theorem~\ref{RES}. Since the functions $ \langle \varphi_{z}, \varphi_{0}\rangle $ and $ \langle \varphi_{z}, \theta_{0}\rangle $ are entire, it again follows from \eqref{eq:RES2} and \eqref{eq:RES3} that the spectra of the operators $A_{t}$ are discrete. Theorem~\ref{RES} yields also an equation for their eigenvalues.

 \begin{Theorem}\label{RESp}
 Let inclusions \eqref{eq:PQz} hold true.
 Then eigenvalues $\lambda $ of the operators $A_{t}$ are given by the equations
 \begin{equation}
 1 - \lambda \langle \varphi_{\lambda}, \varphi_{0}\rangle t - \lambda \langle \varphi_{\lambda}, \theta_{0}\rangle =0 \qquad \mbox{if}\quad t\in {\mathbb R}
 \label{eq:RESp2}\end{equation}
 and
 \begin{equation}
\lambda \langle \varphi_{\lambda}, \varphi_{0}\rangle =0 \qquad \mbox{if} \quad t=\infty.
 \label{eq:RES3p}\end{equation}
 \end{Theorem}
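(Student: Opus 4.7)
The plan is to characterize eigenvalues directly via the decomposition in Theorem~\ref{Neum} rather than chasing poles of the resolvent. The key observation is this: a number $\lambda\in\mathbb{R}$ is an eigenvalue of $A_{t}$ if and only if $\varphi_{\lambda}\in{\mathcal D}(A_{t})$. Indeed, any eigenfunction $u$ of $A_{t}\subset A_{\max}$ at $\lambda$ satisfies the differential equation \eqref{eq:Jy} with $z=\lambda$ together with the boundary condition \eqref{eq:BCz}, so $u$ must be a (nonzero) multiple of $\varphi_{\lambda}$; conversely, if $\varphi_{\lambda}$ lies in ${\mathcal D}(A_{t})$, then it is a genuine eigenfunction.

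So the task reduces to testing membership of $\varphi_{\lambda}$ in the one parameter domains~\eqref{eq:Neum1} or~\eqref{eq:Neum2}. This is precisely where Lemma~\ref{PQ+} (applied at $z=\lambda$) does all the work: it tells us that, modulo ${\mathcal D}(A_{\min})$,
\[
\varphi_{\lambda}\equiv \bigl(1-\lambda\langle \varphi_{\lambda},\theta_{0}\rangle\bigr)\varphi_{0}+\lambda\langle \varphi_{\lambda},\varphi_{0}\rangle\tilde{\theta}_{0}.
\]
By Theorem~\ref{Neum} the sum ${\mathcal D}(A_{\min})\dotplus\{\varphi_{0}\}\dotplus\{\tilde{\theta}_{0}\}$ is direct, so the coefficients of $\varphi_{0}$ and $\tilde{\theta}_{0}$ in this representative are uniquely determined.

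For $t\in{\mathbb R}$, inclusion $\varphi_{\lambda}\in{\mathcal D}(A_{\min})\dotplus\{t\varphi_{0}+\tilde{\theta}_{0}\}$ requires the existence of $c\in{\mathbb C}$ with $(1-\lambda\langle \varphi_{\lambda},\theta_{0}\rangle)\varphi_{0}+\lambda\langle \varphi_{\lambda},\varphi_{0}\rangle\tilde{\theta}_{0}=c(t\varphi_{0}+\tilde{\theta}_{0})$. Matching coefficients of $\tilde{\theta}_{0}$ forces $c=\lambda\langle \varphi_{\lambda},\varphi_{0}\rangle$, and matching coefficients of $\varphi_{0}$ then gives $1-\lambda\langle\varphi_{\lambda},\theta_{0}\rangle=t\,\lambda\langle\varphi_{\lambda},\varphi_{0}\rangle$, which is \eqref{eq:RESp2}. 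For $t=\infty$, membership in ${\mathcal D}(A_{\min})\dotplus\{\varphi_{0}\}$ is equivalent to the vanishing of the coefficient of $\tilde{\theta}_{0}$, giving \eqref{eq:RES3p} immediately.

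There is essentially no serious obstacle: the argument is algebraic once Lemma~\ref{PQ+} and the directness of the decomposition in Theorem~\ref{Neum} are in hand. The only thing to double-check is consistency with the resolvent picture of Theorem~\ref{RES}, namely that \eqref{eq:RESp2} and \eqref{eq:RES3p} are exactly the zeros of the denominators of $\gamma_{t}(z)$ in \eqref{eq:RES2} and \eqref{eq:RES3}; this serves as a sanity check since, by Corollary~\ref{RESb}, eigenvalues of $A_{t}$ are precisely the (real) poles of $R_{t}(z)$, and ${\mathcal R}(z)$ contributes no singularity.
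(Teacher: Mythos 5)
Your proof is correct, but it follows a genuinely different route from the paper's. The paper obtains Theorem~\ref{RESp} as a corollary of the resolvent formula of Theorem~\ref{RES}: since $\mathcal{R}(z)$ is entire in $z$, the eigenvalues of $A_t$ are read off as the poles of the coefficient $\gamma_t(z)$, i.e., the zeros of the denominators in \eqref{eq:RES2} and \eqref{eq:RES3}. You instead bypass the resolvent entirely and argue at the level of domains: $\lambda$ is an eigenvalue iff $\varphi_\lambda\in\mathcal{D}(A_t)$ (since every eigenfunction of $A_t\subset A_{\max}$ is a classical solution of \eqref{eq:Jy} satisfying \eqref{eq:BCz}, hence a multiple of $\varphi_\lambda$, which lies in $L^2$ by \eqref{eq:PQz}), and then Lemma~\ref{PQ+} at $z=\lambda$ together with the directness of the sum in \eqref{eq:Neum} reduces this membership test to matching the coefficients of $\varphi_0$ and $\tilde{\theta}_0$ against \eqref{eq:Neum1} or \eqref{eq:Neum2}, which yields exactly \eqref{eq:RESp2} and \eqref{eq:RES3p}. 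The two arguments share the same computational core (Lemma~\ref{PQ+}, which also drives the proof of Theorem~\ref{RES}), but yours has two advantages: it works directly at real $\lambda$ without passing through $\operatorname{Im} z\neq 0$, and it delivers a clean ``if and only if'' without having to rule out a common zero of the numerator and denominator of $\gamma_t(z)$ --- a point the paper's pole-counting argument leaves implicit. The paper's route, for its part, is essentially free once Theorem~\ref{RES} is proved and exhibits the eigenvalue equation as the vanishing of the Nevanlinna-type denominator, which is the structural analogy with Jacobi operators the paper is after. Your closing consistency check against Corollary~\ref{RESb} is a nice touch and is exactly the paper's intended derivation.
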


 This assertion is a modification of a R.~Nevanlinna's result obtained by him for Jacobi ope\-rators.

 We finally note an obvious fact: if $\lambda $ is an eigenvalue of an operator $A_{t}$, then the corresponding eigenfunction equals $c\varphi_{\lambda} (x)$, where $c\in{\mathbb C}$. In particular, this implies that all eigenvalues of the operators~$A_{t}$ are simple.

\subsection{Concluding remarks}\label{section3.4}

Here are some final observations.

 A. Equations \eqref{eq:RESp2} and \eqref{eq:RES3p} can be rewritten in terms of asymptotics for $x\to\infty$ of solutions to equations \eqref{eq:Jy} for $x\to\infty$.
 Multiplying differential equation \eqref{eq:Jy} for $\varphi_{\lambda}
 $ by $\varphi_{0}$, integrating over a bounded interval $(0,x)$ and then integrating by parts, we find that
 \begin{gather*}
 \lim_{x\to\infty} \bigg( {-}p(x) \varphi_{\lambda}' (x)\varphi_{0}(x)+\int_{0}^x p(y) \varphi_{\lambda}' (y)\varphi_{0}' (y)\,{\rm d}y
 +\int_{0}^x q(y) \varphi_{\lambda} (y) \varphi_{0} (y)\,{\rm d}y\bigg)
 \\
 \qquad{} + p(0) \varphi_{\lambda}' (0)\varphi_{0}(0) =\lambda\langle \varphi_{\lambda},\varphi_{0}\rangle.
 \end{gather*}
 Similarly, multiplying equation~\eqref{eq:Jy} for $\varphi_{0}$ by $\varphi_{\lambda}$, integrating over a bounded interval $(0,x)$ and then integrating by parts, we see that
 \begin{gather*}
 \lim_{x\to\infty} \bigg( {-}p(x) \varphi_{0}' (x)\varphi_{\lambda}(x)+\int_{0}^x p(y) \varphi_{0}' (y) \varphi_{\lambda}' (y)\,{\rm d}y
 +\int_{0}^x q(y) \varphi_{0} (y)\varphi_{\lambda} (y)\,{\rm d}y\bigg)
 \\
 \qquad{} + p(0) \varphi_{0}' (0)\varphi_{\lambda}(0)=0.
 \end{gather*}
 Comparing these two formulas and taking into account boundary condition \eqref{eq:BCz}, we find that
 \begin{equation}
 \lambda\langle \varphi_{\lambda},\varphi_{0}\rangle = \lim_{x\to\infty} p(x) \big( \varphi_{\lambda} (x)\varphi_{0}'(x) - \varphi_{\lambda}' (x)\varphi_{0}(x)\big).
 \label{eq:Na1}\end{equation}
 Thus, equation \eqref{eq:RES3p} is satisfied if and only if the right-hand side of \eqref{eq:Na1} is zero.

 The scalar product $\langle \varphi_{\lambda}, \theta_{0}\rangle$ can be calculated in an analogous way. We only have to observe that according to \eqref{eq:Pz} or \eqref{eq:Pzi}
 \[
 p(0) \varphi_{\lambda}'(0) \theta_{0} (0) - p(0) \theta_{0}' (0)\varphi_{\lambda}(0) =1,
 \]
 so that instead of \eqref{eq:Na1} we now have
 \begin{equation}
 \lambda\langle \varphi_{\lambda},\theta_{0}\rangle = 1+ \lim_{x\to\infty} p(x) \big( \varphi_{\lambda} (x)\theta_{0}'(x) - \varphi_{\lambda}' (x)\theta_{0}(x)\big).
 \label{eq:Na2}\end{equation}
 Putting together \eqref{eq:Na1} and \eqref{eq:Na2}, we see that equation \eqref{eq:RESp2} for $\lambda$ can be written as
 \[
 \lim_{x\to\infty} p(x) \big( \varphi_{\lambda} (x)\big(\theta_{0}'(x) + t\varphi_{0}'(x)\big)- \varphi_{\lambda}' (x) \big(\theta_{0}(x) + t\varphi_{0}(x)\big) \big)=0.
 \]

Of course the scalar products $\langle \theta_z, \varphi_{0}\rangle$ and $\langle \theta_z, \theta_{0}\rangle$ in the numerators of \eqref{eq:RES2} and \eqref{eq:RES3} can be written in the same way as~\eqref{eq:Na1} and~\eqref{eq:Na2}.

 B. Starting from Theorem~\ref{Neum}, we can everywhere replace the functions $\varphi_{0}$ and $\theta_{0}$ by $\varphi_{\zeta}$ and $\theta_{\zeta}$, where $\zeta$ is an arbitrary real fixed number. Then the construction of the paper remains unchanged if the factor $z$ in formulas
 \eqref{eq:RES2} and \eqref{eq:RES3} for $\gamma_{t}(z)$ is replaced by $z-\zeta$. The simplest way to see this is to apply the results obtained above to the operator ${\mathcal A}-\zeta I$ instead of~${\mathcal A}$.

 C.~Finally, we compare resolvent formulas in the LP and LC cases. In the LP case the resolvent $R(z)$ of a self-adjoint operator $A= A_{\min}$ is given by the relation
 \begin{equation}
 ( R (z)h) (x) = \frac{1}{\{\varphi_{z}, f_{z}\}} \bigg(f_{z}(x) \int_{0}^x \varphi_{z}(y) h (y) \,{\rm d}y+
 \varphi_{z}(x) \int_x^\infty f_{z}(y) h (y)\,{\rm d}y\bigg),
 \label{eq:R-LP}\end{equation}
 where $f_{z}(x)$ is a unique (up to a constant factor) solution of equation \eqref{eq:Jy} belonging to $L^2 ({\mathbb R}_{+})$. It can be chosen in a form $f_{z}= \theta_{z}+ w(z) \varphi_{z}$, where $w(z)$ is known as the Weyl function. Substituting this expression into~\eqref{eq:R-LP}, we see that {\it formally}
 \begin{equation}
 R (z) = w(z)\langle\cdot, \varphi_{\bar{z}}\rangle \varphi_{z} + {\mathcal R} (z),
 \label{eq:R-LP1}\end{equation}
 where $ {\mathcal R} (z)$ is given by equality \eqref{eq:RR11}. This relation looks algebraically similar to~\eqref{eq:RES1}, where~$\gamma_{t}(z)$ plays the role of the Weyl function $w(z)$. Note, however, that in the LP case~$w(z)$ is determined uniquely by the condition $ \theta_{z}+ w(z) \varphi_{z} \in L^2 ({\mathbb R}_{+})$, while in the LC case
 $\gamma_{t}(z)$ depends on the choice of a self-adjoint extension of the operator $ A_{\min}$. We also emphasize that relation~\eqref{eq:R-LP1} is only formal because $\varphi_{z}$ and $\theta_{z}$ are not in $L^2 ({\mathbb R}_{+})$
 in the LP case.

 \subsection*{Acknowledgements}

 Supported by project Russian Science Foundation 17-11-01126.

\pdfbookmark[1]{References}{ref}
\LastPageEnding

\end{document}